\newtheorem{Theorem}{Theorem}[section]
\newtheorem{Remark}[Theorem]{Remark}  
\newtheorem{Assumption}[Theorem]{Assumption}
\renewcommand{\fnum@figure}{Fig. \thefigure}
\newcommand\hl{\color{orange}\bf}
\let\OLDthebibliography\thebibliography
\renewcommand\thebibliography[1]{
  \OLDthebibliography{#1}
  \setlength{\parskip}{1pt}
  \setlength{\itemsep}{0pt plus 0.0ex}}
\title{Stein's Method of Moments on the Sphere}
\author{Adrian Fischer\footnote{Adrian Fischer, University of Oxford, UK. E-mail: adrian.fischer@stats.ox.ac.uk}, Robert E. Gaunt\footnote{Robert E. Gaunt, The University of Manchester, UK. E-mail: robert.gaunt@manchester.ac.uk}\: and Yvik Swan\footnote{
Yvik Swan, Université libre de Bruxelles, Belgium. E-mail: yvik.swan@ulb.be}}
\begin{document}
\maketitle

\begin{abstract}
    We use Stein characterizations  to obtain new moment-type  estimators for the parameters of three classical spherical distributions (namely the Fisher-Bingham, the von Mises-Fisher, and the Watson distributions)  in the i.i.d.\ case. This leads to explicit estimators which  have good asymptotic properties (close to efficiency) and therefore lead to interesting alternatives to classical   maximum likelihood methods or more recent score matching estimators. We perform competitive simulation studies to assess the quality of the new estimators.  Finally, the practical relevance of our estimators  is illustrated on a real data application  in spherical latent representations of handwritten numbers.
\end{abstract}

\noindent{{\bf{Keywords:}}} Point estimation; Stein's method; Spherical distributions; Fisher-Bingham distribution; Autoencoder

\section{Introduction}

Directional statistics is the branch of  statistics that deals with   data that comes in the form of  angles, directions, coordinates, etc.  Areas of application which have  driven the research in this field include astronomy \cite{garcia2023projection,garcia2020optimal,ghosh2024directional,jupp1995some}, bioinformatics \cite{ameijeiras2022sine,dortet2008model,kent2005using,mardia2018directional}, ecology \cite{ameijeiras2019fire,landler2018circular, merrifield2006statistical}, and machine learning \cite{chen2021maximum,gopal2014mises,sra2018directional,wang2023deep}. This list is by no means exhaustive and we refer to the monographs \cite{fisher1993statistical,ley2017modern, mardia2000directional, pewsey2013circular} and surveys \cite{garcia2018overview,mardia2024fisher, pewsey2021recent} for more applications and references. \par
 
At the heart of directional statistics are  probability  distributions on the hyperspheres  $\mathcal{S}^{d-1}  := \{ x \in \mathbb {R}^d \mbox{ such that } x^\top x = 1\}$ of $\mathbb{R}^d$ for $d \ge 2$. One of the most important families of such  distributions    is   the so-called \emph{Fisher-Bingham} family which is obtained by conditioning  a multivariate normal distribution to lie on a $d$-dimensional hypersphere. This family is characterized by the  probability density function (pdf) 
\begin{align*}
    p_\theta(x) = \frac{1}{C(\mu,A)} \exp\big( \mu^{\top}x+ x^{\top}Ax \big), \quad x \in \mathcal{S}^{d-1}, 
\end{align*} 
where  $\theta = (\mu,A) $ with $\mu \in \mathbb{R}^d$ and $A \in \mathbb{R}^{d \times d}$ symmetric,  $C(\mu,A) = \int_{\mathcal{S}^{d-1}}  \exp( \mu^{\top}x+ x^{\top}Ax) \sigma_{d-1}(dx)$ is the normalising constant and, here and throughout, densities on $\mathcal{S}^{d-1}$ are taken with respect to the surface area measure $\sigma_{d-1}$ on $\mathcal{S}^{d-1}$.    The Fisher-Bingham family includes the \emph{Watson} distribution with density $    p_{\theta}(x) \propto \exp\big(\kappa (\mu^{\top} x)^2\big)$ and the \emph{von Mises-Fisher} distribution (vMF)  with density  $p_{\theta}(x) \propto  \exp\big(\kappa \mu^{\top} x\big)$ (dubbed ``the most important distribution in directional data analysis''  by \cite{mabdia1975distribution}). \par 

The normalising constant for the vMF and Watson distributions can be written in terms of   special functions (gamma, modified Bessel,  confluent hypergeometric); for general parameters the normalising constant  $C(\mu, A)$ of Fisher-Bingham distributions is not known in closed form. Numerical computation of this normalising constant is numerically unstable and computationally heavy, especially in a high-dimensional setting.  Consequently, maximum likelihood estimation of the parameters of these distributions requires developing fast and reliable numerical procedures for evaluating the normalizing constants. See e.g.\ \cite{christie2015efficient,hornik2014movmf,hornik2014maximum,sra2012short,tanabe2007parameter} for estimation of the parameters of vMF distribution,  \cite{dey2022inference, sra2013multivariate} for those of the Watson distribution, and   \cite{chen2021maximum,koyama2014holonomic,kume2018exact,kume2005saddlepoint} for those of the Fisher-Bingham family of distribution. A natural alternative to maximum likelihood methods is the score matching approach \cite{hyvarinen2007some,hyvarinen2005estimation}, which is famously applicable to unnormalized models. This approach was extended to manifolds (and thus also  hyperspheres) in \cite{mardia2016score, williams2022score}. However, in concrete examples the resulting estimators still have to be computed via numerical optimization which can be a problem for distributions on the hypersphere such as the Fisher-Bingham family where the dimension of the parameter space grows quadratically with the dimension of the data. Hence, for more complicated models, the score matching approach is often applied to settings where some of the parameters are assumed to be known or can be estimated through other techniques. \par 

In this paper, we pursue a different approach based on Stein identities for spherical distribution. Our work is  an extension of \cite{ebner2023point} in which the authors used the characterising property of \textit{Stein operators} to perform parameter estimation in univariate parametric models. We quickly sketch the approach in a multivariate (Euclidean) setting. Let $X$ be a random vector that lives in $\mathbb{R}^d$  and admits a differentiable probability density function (pdf) $p_{\theta}$ which depends on a parameter $\theta \in \Theta \subset \mathbb{R}^p$. Then define the \textit{density approach} \cite{ley2017stein,ley2013stein,mijoule2023stein} Stein operator as 
\begin{align} \label{def_mult_stein operator}
    \mathcal{A}_{\theta}f(x)= \frac{\nabla \big( f(x) p_{\theta}(x) \big)}{p_{\theta}(x)},
\end{align}
where $f:\mathbb{R}^d \rightarrow \mathbb{R}$ is a differentiable test function. Operator \eqref{def_mult_stein operator} does not depend on the normalising constant of $p_\theta$ and, crucially, satisfies 
\begin{align} \label{Stein_equation_mult}
    \mathbb{E} [ \mathcal{A}_{\theta}f(X)] =0
\end{align}
for all functions $f: \mathbb R^d   \to \mathbb R$ belonging to a wide (and explicit)   class $\mathscr{F}_{\theta}$. We call \eqref{Stein_equation_mult} a  \textit{Stein identity} for $p_\theta$. Under general assumptions on $p_\theta$, by some appropriate law of large numbers it should then be the case that, with $X_1, \ldots, X_n$ an i.i.d.\ sample from $p_\theta$, 
$  \overline{\mathcal{A}_{\theta}f(X)} := n^{-1} \sum_{i=1}^n \mathcal{A}_{\theta}f(X_i) \approx 0$ in probability for $n$ sufficiently large and  any $f \in \mathscr{F}_\theta$. 
Hence for a $p$-dimensional parameter $\theta \in \Theta \subset \mathbb{R}^p$, fixing  $p$  test functions $f_1, \ldots, f_p$ in $\mathscr{F}= \cap_{\theta \in \Theta} \mathscr{F}_\theta$ and solving for $\theta$ the system of $p$ equations    in $p$ unknowns 
\begin{equation}\label{Stein_equation_multemp}
    \overline{\mathcal{A}_{\theta}f_j(X)} = 0 \mbox{ for all  } j = 1, \ldots, p
\end{equation}
  leads to a moment type estimator for $\theta$, which we call a  \textit{Stein estimator}. We call \eqref{Stein_equation_multemp} the empirical Stein equation, and the whole approach \emph{Stein's method of moments}. \par

The purpose of this paper is to develop Stein's method of moments on hyperspheres.  To this end, we need a suitable counterpart to \eqref{def_mult_stein operator} and \eqref{Stein_equation_mult}. Before specialising to spheres, let $\mathcal{M}$ be a smooth compact $(d-1)$-dimensional sub-manifold of $\mathbb{R}^d$ (with or without boundary) equipped with the Riemannian metric $\langle \cdot, \cdot \rangle_{\mathcal{M}}$ and let $p_{\theta}:\mathcal{M} \rightarrow \mathbb{R}$ be the smooth density of a probability distribution on $\mathcal{M}$ (with respect to the surface area measure on $\mathcal{M}$) where $\theta \in \Theta \subset \mathbb{R}^p$ is the parameter we wish to estimate. Moreover, let
\begin{align*}
    \mathscr{F}:=\big\{ f: \mathcal{M} \rightarrow \mathbb{R} \, \vert \, f \text{ is smooth} \big\}.
\end{align*}
Green's first identity then ensures that we have for any $f \in \mathscr{F}$,
\begin{align} \label{green_id}
    \int_{\mathcal{M}} p_{\theta} \Delta_{\mathcal{M}} f + \int_{\mathcal{M}} \langle \nabla_{\mathcal{M}} p_{\theta}, \nabla_{\mathcal{M}} f \rangle_{\mathcal{M}} = \int_{\partial \mathcal{M}} p_{\theta} f \vec{n},
\end{align}
where $\vec{n}$ is the outward pointing normal, $\partial \mathcal{M}$ is the manifold boundary and $\Delta_{\mathcal{M}}$, $\nabla_{\mathcal{M}}$ are the Laplace-Beltrami and gradient operator on $\mathcal{M}$. For a manifold without boundary, the right-hand side of \eqref{green_id} is equal to zero which gives rise to the Stein operator
\begin{align} \label{stein_operator_manifold}
    \mathcal{A}_{\theta}f(x)= \Delta_{\mathcal{M}} f(x) + \Big\langle \frac{\nabla_{\mathcal{M}} p_{\theta}(x)}{p_{\theta}(x)}, \nabla_{\mathcal{M}} f(x) \Big\rangle_{\mathcal{M}}
\end{align}
for any distribution on a manifold without boundary $\mathcal{M}$ with smooth density $p_{\theta}$ and any smooth function $f$. This operator is independent of a parametrization of $\mathcal{M}$ and was recently used in \cite{le2024diffusion}, in which the authors developed bounds on the Wasserstein distance between distributions on Riemannian manifolds. The operator \eqref{stein_operator_manifold} is the manifold equivalent to \eqref{def_mult_stein operator} applied to the gradient of a test function $f$. 

Let us now consider the unit sphere $\mathcal{S}^{d-1}$ endowed with the canonical metric $\langle \cdot, \cdot \rangle_{\mathcal{S}}$. In order to calculate the Stein operator, we wish to express all differential operators above in terms of their Euclidean counterparts (the same applies to the Riemannian metric). Therefore, let $\Delta$, $\nabla$, $\nabla^2$ be the standard operators on $\mathbb{R}^d$. Note that every smooth function $f:\mathcal{M} \rightarrow \mathbb{R}$ can be extended to a smooth function $\tilde{f}$ defined on an open set $U \subset \mathbb{R}^d$ with $\mathcal{S}^{d-1} \subset U$ (compare \cite[Lemma 5.34]{lee2012introduction}). In the sequel, $\Delta f$, $\nabla f$ and $\nabla^2 f$ then correspond to $\Delta \tilde{f}$, $\nabla \tilde{f}$ and $\nabla^2 \tilde{f}$. For a function $f \in \mathscr{F}$ we can express the spherical Laplace-Beltrami operator $\Delta_{\mathcal{S}}$ in terms of the Euclidean Laplace operator. We have
\begin{align*}
    \Delta_{\mathcal{S}} f(x) = (1-d) \ x^{\top} \nabla f(x) - x^{\top} \nabla^2 f(x) x + \Delta f(x), \quad x \in \mathcal{S}^{d-1}.
\end{align*}
Moreover, one can show that
\begin{align*}
   \Big \langle \frac{\nabla_{\mathcal{S}} p_{\theta}(x)}{p_{\theta}(x)}, \nabla_{\mathcal{S}} f(x) \Big\rangle_{\mathcal{S}} =  \frac{\nabla p_{\theta}(x)}{p_{\theta}(x)}  \big (I_d - xx^{\top} \big) \nabla f(x), \quad x \in \mathcal{S}^{d-1}.
\end{align*}
Note that, in both equations above, the right-hand side is independent of the choice of the smooth extensions of the functions $f$ and $p_{\theta}$. For a distribution on $\mathcal{S}^{d-1}$ with density $p_{\theta}$ we therefore have the Stein operator
\begin{align} \label{stein_operator_manifolds}
    \mathcal{A}_{\theta}f(x)= (1-d) \ x^{\top} \nabla f(x) - x^{\top} \nabla^2 f(x) x + \Delta f(x)+ \frac{\nabla p_{\theta}(x)}{p_{\theta}(x)}  \big (I_d - xx^{\top} \big) \nabla f(x).
\end{align}
It follows immediately from Green's identity that we have indeed
\begin{align} \label{Stein_identity_manifolds}
    \mathbb{E}[\mathcal{A}_{\theta}f(X)]=0
\end{align}
for a random variable $X$ with values in $\mathcal{S}^{d-1}$ which admits the density $p_{\theta}$ and any function $f \in \mathscr{F}$. 
 \par 

The rest of the  paper is organized as follows. In Section \ref{section_fisherbingham}, we propose a new estimator for the Fisher-Bingham distribution and in Sections \ref{section_fisher_von_mises} and \ref{section_watson} we treat the vMF and Watson sub-families, whereby we prove the competitiveness of our approach by comparing to the asymptotically efficient maximum likelihood estimator (MLE). In Section \ref{section_real_data_example}, we apply our new estimator for the Fisher-Bingham distribution to a real data example from machine learning. Finally, Appendix \ref{appendix} collects all proofs.

\section{Fisher-Bingham distribution} \label{section_fisherbingham}
The pdf of the Fisher-Bingham distribution $FB(\mu,A)$ with parameter $\theta = (\mu,A)$ such that $\mu \in \mathbb{R}^{d}$, $A \in \mathbb{R}^{d \times d}$ symmetric, is given by
\begin{align*}
p_{\theta}(x)= \frac{1}{C(\mu,A)} \exp\big( \mu^{\top}x+ x^{\top}Ax \big), \quad x \in \mathcal{S}^{d-1},
\end{align*} 
with unknown normalising constant $C(\mu,A)$. Since $x^{\top}x=1$, we need to impose an additional restriction on $A$ to ensure that the parameters are identifiable. In order to simplify the notation concerning the parameter estimation, we suppose here that $A_{d,d}=0$. If $A$ is positive definite, the Fisher-Bingham distribution can be interpreted as a multivariate Gaussian distribution on $\mathbb{R}^d$ conditioned with respect to the sphere $\mathcal{S}^{d-1}$. \par
The Stein operator \eqref{stein_operator_manifolds} reads
\begin{align*}
    \mathcal{A}f(x)= (1-d) \ x^{\top} \nabla f(x) - x^{\top} \nabla^2 f(x) x + \Delta f(x) + (\mu^{\top} + 2x^{\top} A)  \big (I_d - xx^{\top} \big) \nabla f(x), \quad f \in \mathscr{F}.
\end{align*}
Let us now consider the estimation of the parameters $\mu$ and $A$. For this purpose, suppose we have an i.i.d.\ sample $X_1,\ldots,X_n \sim FB(\mu_0,A_0) $ defined on a common probability space $(\Omega,\mathcal{F},\mathbb{P})$. In view of the restrictions with respect to $A$ we have $d+d(d+1)/2-1$ parameters to estimate. \par

We consider two test functions $f_1:\mathcal{S}^{d-1} \rightarrow \mathbb{R}^d$ and $f_2:\mathcal{S}^{d-1} \rightarrow \mathbb{R}^{d(d+1)/2-1}$ for which each component is in $\mathscr{F}$. We solve the empirical version of the Stein identity \eqref{Stein_identity_manifolds} for $\theta$, i.e.
\begin{align} \label{stein_identity_fisherbingham}
     \frac{1}{n} \sum_{i=1}^n \mathcal{A}_{\theta} f_i(X_i)=0, \quad i=1,2,
\end{align}
where $\mathcal{A}_{\theta}$ is applied to each component of $f_i$, $i=1,2$. For both functions $f_i, \, i=1,2$, we write $\nabla f_i$ for the standard Jacobian matrix and $\nabla^2 f_i$ for the matrix whose rows contain the vectorized Hessians and each row represents a component of the function $f_i$. For example, $\nabla^2 f_2$ is then a $(d(d+1)/2-1) \times d^2$--matrix. \par
We introduce the following quantities:
\begin{alignat*}{2}
   M_n&=2\overline{ \big( \big( \nabla f_2(X) \big (I_d - XX^{\top} \big) \big) \otimes X^{\top} \big) {\bf D } } \quad && \in \mathbb{R}^{(d(d+1)/2-1) \times d(d+1)/2}, \\
   D_n&=\overline{ (d-1) \nabla f_2(X) X +  \nabla^2 f_2(X) (X \otimes X) - \Delta f_2(X) } \quad && \in \mathbb{R}^{d(d+1)/2-1}, \\
   E_n&=\overline{ \big( \nabla f_2(X) \big (I_d - XX^{\top} \big) \big) } \quad && \in \mathbb{R}^{(d(d+1)/2-1) \times d}, \\
   G_n&=2\overline{ \big( \big( \nabla f_1(X) \big (I_d - XX^{\top} \big) \big) \otimes X^{\top} \big) {\bf D} } \quad && \in \mathbb{R}^{d \times d(d+1)/2}, \\
    H_n&=\overline{ (d-1) \nabla f_1(X) X + \nabla^2 f_1(X) (X \otimes X) - \Delta f_1(X) } \quad && \in \mathbb{R}^d, \\
    L_n&=\overline{\nabla f_1(X) (I_d - XX^{\top}) } \quad &&\in \mathbb{R}^{d \times d},
\end{alignat*}
where ${ \bf D} \in \mathbb{R}^{d^2 \times d(d+1)/2}$ is the duplication matrix and $\otimes$ is the standard Kronecker product. In the display above we wrote $\overline{f(X)}= n^{-1}\sum_{i=1}^n f(X_i)$. Since $A_{dd}=0$, we have to adjust the quantities above. Therefore, let $M_n'$ and $G_n'$ be the matrices $M_n$ and $G_n$ without the last column. The solutions $\hat{\mu}_n$ and $\hat{A}_n$ to \eqref{stein_identity_fisherbingham} are then given by
\begin{align*}
    \mathrm{vech}'(\hat{A}_n)&= (M_n')^{-1}(D_n- E_n\hat{\mu}_n), \\
    \hat{\mu}_n&= (L_n - G_n' (M_n')^{-1}E_n)^{-1} (H_n - G_n' (M_n')^{-1} D_n),
\end{align*}
where we wrote $\mathrm{vech}'(\hat{A}_n)$ for the vector $\mathrm{vech}(\hat{A}_n)$ without the last component, which is then set to $0$ given the parametrization of the matrix $A$. \par

In the sequel, we work out the conditions on the test functions under which our new estimators are consistent. We introduce the following assumptions. 
\begin{Assumption} \label{Assumption_consistency_fisherbingham}
    Suppose that the matrices $\mathbb{E}[M_1']$ and $\mathbb{E}[L_1] - \mathbb{E}[G_1'] \mathbb{E}[M_1']^{-1}\mathbb{E}[E_1]$ are invertible.
\end{Assumption}

\begin{Theorem} \label{theoem_consistency_fisherbingham}
Suppose Assumption \ref{Assumption_consistency_fisherbingham} holds. Then $\hat{\theta}_n=(\hat{\mu}_n,\hat{A}_n)$ exists with probability converging to one and is strongly consistent in the following sense: There is a set $A \subset \Omega$ with $\mathbb{P}(A)=1$ such that for each $\omega \in A$ there is a $N \in \mathbb{N}$ such that $\hat{\theta}_n$ exists for each $n \geq N$ and 
\begin{align*}
(\hat{\mu}_n,\hat{A}_n)(\omega) \overset{\mathrm{a.s.}}{\longrightarrow} (\mu_0,A_0)
\end{align*}
as $n \rightarrow \infty$.
\end{Theorem}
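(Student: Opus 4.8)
The plan is to exploit the fact that $\hat\theta_n=(\hat\mu_n,\hat A_n)$ is, by construction, an explicit continuous function of a finite list of sample means, to which the strong law of large numbers (SLLN) and the continuous mapping theorem apply; the almost sure limit is then pinned down to $\theta_0$ by means of the population Stein identity \eqref{Stein_identity_manifolds}. \textbf{Step 1 (SLLN for the building blocks).} Each entry of $M_n,D_n,E_n,G_n,H_n,L_n$ is a sample average $n^{-1}\sum_{i=1}^n g(X_i)$ of a fixed function $g$ built from the components of $\nabla f_1,\nabla f_2$, $\nabla^2 f_1,\nabla^2 f_2$, $\Delta f_1,\Delta f_2$ and the coordinates of $X$. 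Since $f_1,f_2$ are smooth and $\mathcal{S}^{d-1}$ is compact, every such $g$ is bounded, hence integrable, and by the SLLN there is a set $A\subset\Omega$ with $\mathbb{P}(A)=1$ on which, simultaneously, $M_n'\to\mathbb{E}[M_1']$, $D_n\to\mathbb{E}[D_1]$, $E_n\to\mathbb{E}[E_1]$, $G_n'\to\mathbb{E}[G_1']$, $H_n\to\mathbb{E}[H_1]$ and $L_n\to\mathbb{E}[L_1]$ as $n\to\infty$.

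\textbf{Step 2 (existence).} Fix $\omega\in A$. Because $\det(\cdot)$ is continuous and $\det\mathbb{E}[M_1']\neq0$ by Assumption \ref{Assumption_consistency_fisherbingham}, there is $N_1$ with $M_n'$ invertible for all $n\geq N_1$, and then $(M_n')^{-1}\to\mathbb{E}[M_1']^{-1}$ by continuity of matrix inversion on the invertible matrices. Hence $L_n-G_n'(M_n')^{-1}E_n\to\mathbb{E}[L_1]-\mathbb{E}[G_1']\mathbb{E}[M_1']^{-1}\mathbb{E}[E_1]$, which is invertible by Assumption \ref{Assumption_consistency_fisherbingham}, so the same argument yields $N\geq N_1$ such that $L_n-G_n'(M_n')^{-1}E_n$ is invertible for all $n\geq N$. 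For such $n$ the two displayed formulas defining $\hat\mu_n$ and $\mathrm{vech}'(\hat A_n)$ are well defined, so $\hat\theta_n$ exists; in particular $\mathbb{P}(\hat\theta_n\text{ exists})\to 1$.

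\textbf{Step 3 (convergence and identification of the limit).} By the continuous mapping theorem, on $A$ we have $\hat\mu_n\to\mu_\ast$ and $\mathrm{vech}'(\hat A_n)\to v_\ast$, where
\begin{align*}
\mu_\ast &:=\big(\mathbb{E}[L_1]-\mathbb{E}[G_1']\mathbb{E}[M_1']^{-1}\mathbb{E}[E_1]\big)^{-1}\big(\mathbb{E}[H_1]-\mathbb{E}[G_1']\mathbb{E}[M_1']^{-1}\mathbb{E}[D_1]\big), \\
v_\ast &:=\mathbb{E}[M_1']^{-1}\big(\mathbb{E}[D_1]-\mathbb{E}[E_1]\mu_\ast\big).
\end{align*}
It remains to identify $(\mu_\ast,v_\ast)$ with $(\mu_0,\mathrm{vech}'(A_0))$. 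Expanding $\mathcal{A}_{\theta_0}f_i$ componentwise and rewriting the term $2\,\nabla f_i(X)(I_d-XX^\top)A_0X$ via $A_0X=(X^\top\otimes I_d)\mathbf{D}\,\mathrm{vech}(A_0)$ (and using $(A_0)_{dd}=0$ to delete the last column), the Stein identity $\mathbb{E}[\mathcal{A}_{\theta_0}f_i(X)]=0$, $i=1,2$, becomes exactly the population counterpart of \eqref{stein_identity_fisherbingham}, namely
\begin{align*}
\mathbb{E}[M_1']\,\mathrm{vech}'(A_0)+\mathbb{E}[E_1]\,\mu_0 &= \mathbb{E}[D_1], \\
\mathbb{E}[G_1']\,\mathrm{vech}'(A_0)+\mathbb{E}[L_1]\,\mu_0 &= \mathbb{E}[H_1].
\end{align*}
Eliminating $\mathrm{vech}'(A_0)$ with the invertible matrix $\mathbb{E}[M_1']$ and then invoking invertibility of $\mathbb{E}[L_1]-\mathbb{E}[G_1']\mathbb{E}[M_1']^{-1}\mathbb{E}[E_1]$ shows that $(\mu_0,\mathrm{vech}'(A_0))$ is the unique solution of this linear system, hence $(\mu_\ast,v_\ast)=(\mu_0,\mathrm{vech}'(A_0))$. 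Therefore $(\hat\mu_n,\hat A_n)(\omega)\to(\mu_0,A_0)$ for every $\omega\in A$, which is the claim.

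\textbf{Main obstacle.} The substantive part is Step 3: one must verify that the linear-algebra reduction producing the closed-form estimator is literally the empirical Stein system \eqref{stein_identity_fisherbingham}, and that the population Stein identity forces its solution to be $\theta_0$. The only real work is bookkeeping — matching the duplication matrix $\mathbf{D}$, the Kronecker products, and the identifiability constraint $(A_0)_{dd}=0$ to the precise definitions of $M_n,\dots,L_n$. Everything else (compactness of $\mathcal{S}^{d-1}$ and boundedness of the integrands, the SLLN, continuity of the determinant and of matrix inversion, and the continuous mapping theorem) is routine.
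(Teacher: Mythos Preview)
Your proof is correct and follows essentially the same route as the paper: identify the population limits via the Stein identity \eqref{Stein_identity_manifolds} (your Step~3 corresponds to the two displayed equations at the start of the paper's proof), invoke the SLLN on the bounded building blocks, and conclude by the continuous mapping theorem together with Assumption~\ref{Assumption_consistency_fisherbingham}. Your version is merely more explicit about the existence argument (continuity of the determinant and of matrix inversion) and about why the integrands are in $L^1$ (compactness of $\mathcal{S}^{d-1}$), both of which the paper leaves implicit.
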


We found the following choice of test functions convenient: $f_1:\mathcal{S}^{d-1} \rightarrow \mathbb{R}^d$, $x \mapsto x $ and $f_2:\mathcal{S}^{d-1} \rightarrow \mathbb{R}^{d(d+1)/2}$, $x \mapsto \mathrm{vech}(x x^{\top}) $. Since $x \in \mathcal{S}^{d-1}$, we do not gain any further information with the last component of $f_2$, which we therefore delete. Consequently, we arrive at the correct dimension $d(d+1)/2-1$ regarding $f_2$. We remind of the particular ordering of the elements of $\nabla f_1$, $\nabla f_2$, $\nabla^2 f_1$ and $\nabla^2 f_2$. We calculate $\nabla f_1(x)= I_d$, as well as $\nabla^2 f_1(x) =0$ and $\Delta f_1(x)=0$. For $f_2$, we have that 
\begin{align*}
   \nabla f_2(x) = \begin{pmatrix} 2x_1 & & & \\ x_2 & x_1 & & \\ x_3 & & x_1 & \\ \vdots & & & \ddots \\ 0& 2x_2 & &  \\  & x_3 & x_2 &  \\ & \vdots & & \ddots \end{pmatrix} \in \mathbb{R}^{(d(d+1)/2-1) \times d}.
\end{align*}
Let $E_{ij} \in \mathbb{R}^{d \times d}$ be the matrix which is equal to zero except for the elements $(i,j)$ and $(j,i)$ which are equal to one. Then we obtain
\begin{align*}
    \nabla^2f_2(x) = \begin{pmatrix} 2\mathrm{vec}(E_{11})^{\top} \\  \mathrm{vec}(E_{12})^{\top} \\ \vdots \\  \mathrm{vec}(E_{1d})^{\top} \\ 2\mathrm{vec}(E_{22})^{\top} \\  \mathrm{vec}(E_{23})^{\top} \\ \vdots  \end{pmatrix} \in \mathbb{R}^{(d(d+1)/2-1) \times d^2}.
\end{align*}
Furthermore,
\begin{align*}
    \Delta f_2(x) = 2 \, \mathrm{vech}'(I_d) \in  \mathbb{R}^{d(d+1)/2-1}.
\end{align*}

\begin{figure}
    \vspace{-2cm}
    \centering     
    \begin{minipage}[c]{0.27\textwidth}
        \includegraphics[width=\textwidth]{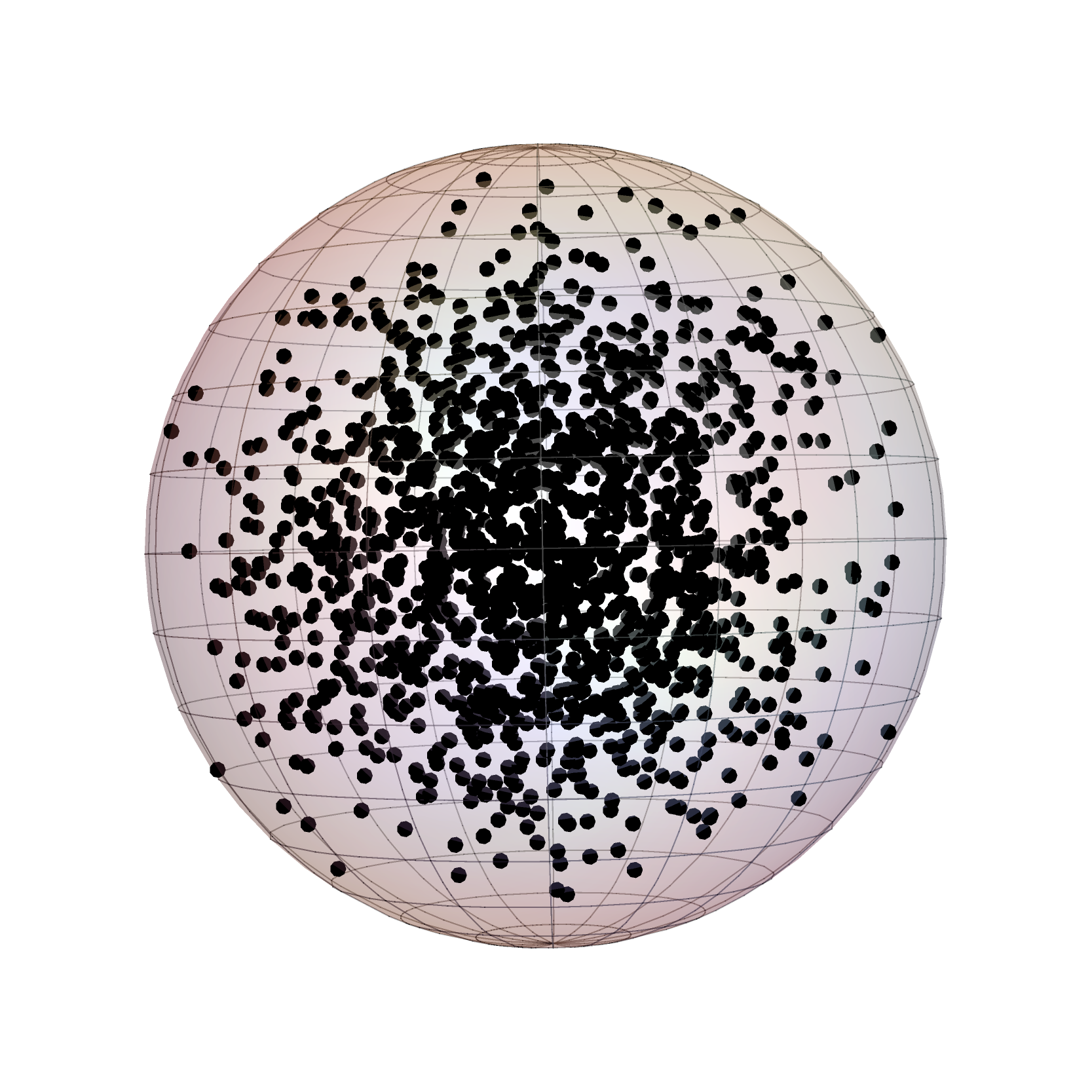}
    \end{minipage}\hfill
    \begin{minipage}[c]{0.22\textwidth}
        \caption{\label{sample1} \\ $\mu=\begin{pmatrix} 10 & 0 & 0 \end{pmatrix}^{\top}$ \\ $A= 0$}
    \end{minipage}
    \begin{minipage}[c]{0.27\textwidth}
        \includegraphics[width=\textwidth]{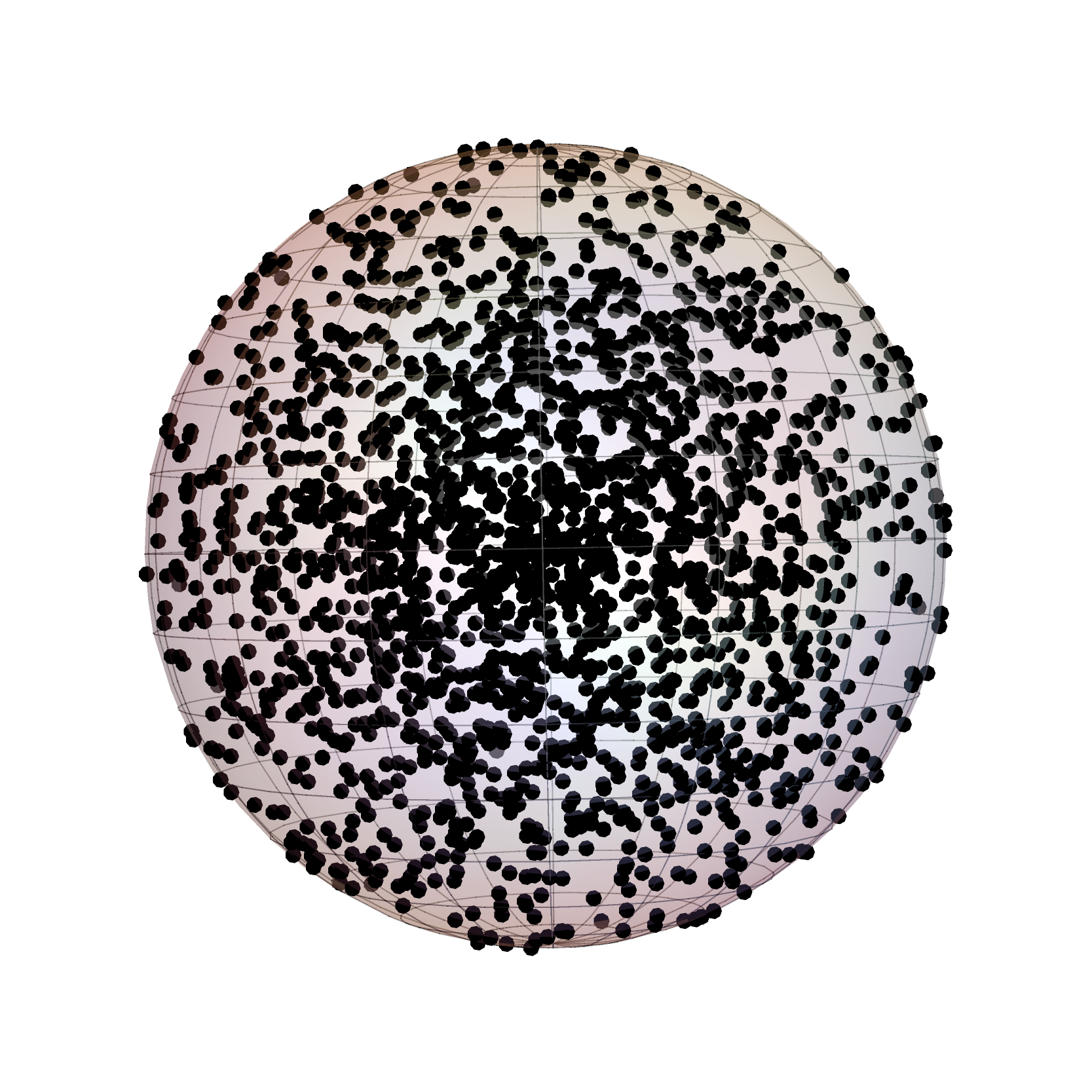}
    \end{minipage} \vspace{-0.5cm} \hfill
    \begin{minipage}[c]{0.22\textwidth}
        \caption{\label{sample2} \\ $\mu=\begin{pmatrix} 5 & 0 & 0 \end{pmatrix}^{\top}$ \\ $A= 0$}
    \end{minipage}
    \vspace{-0.5cm}
    \begin{minipage}[c]{0.27\textwidth}
        \includegraphics[width=\textwidth]{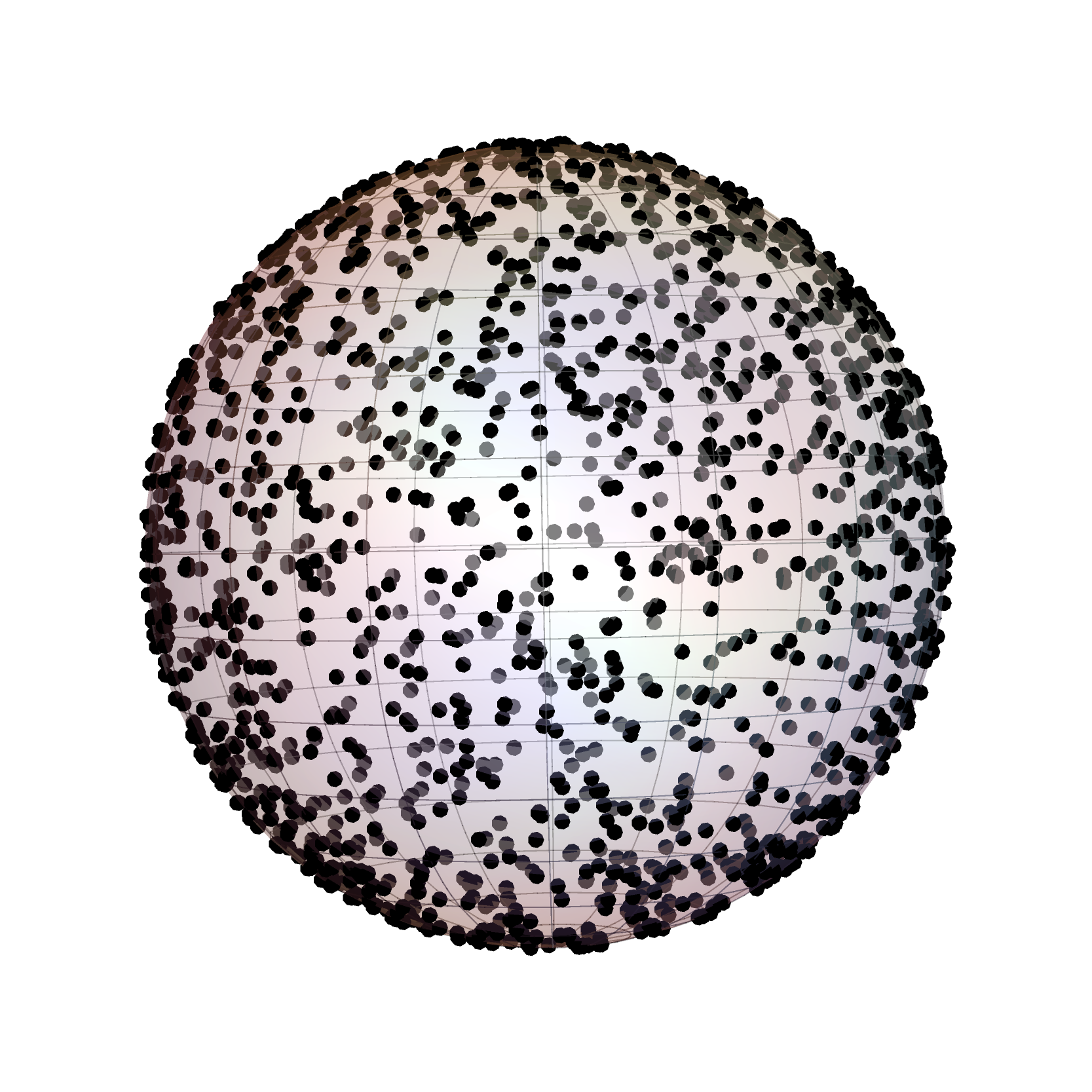}
    \end{minipage}\hfill
    \begin{minipage}[c]{0.22\textwidth}
        \caption{\label{sample3} \\ $\mu=\begin{pmatrix} 0.5 & 0 & 0 \end{pmatrix}^{\top}$ \\ $A= 0$}
    \end{minipage}
    \begin{minipage}[c]{0.27\textwidth}
        \includegraphics[width=\textwidth]{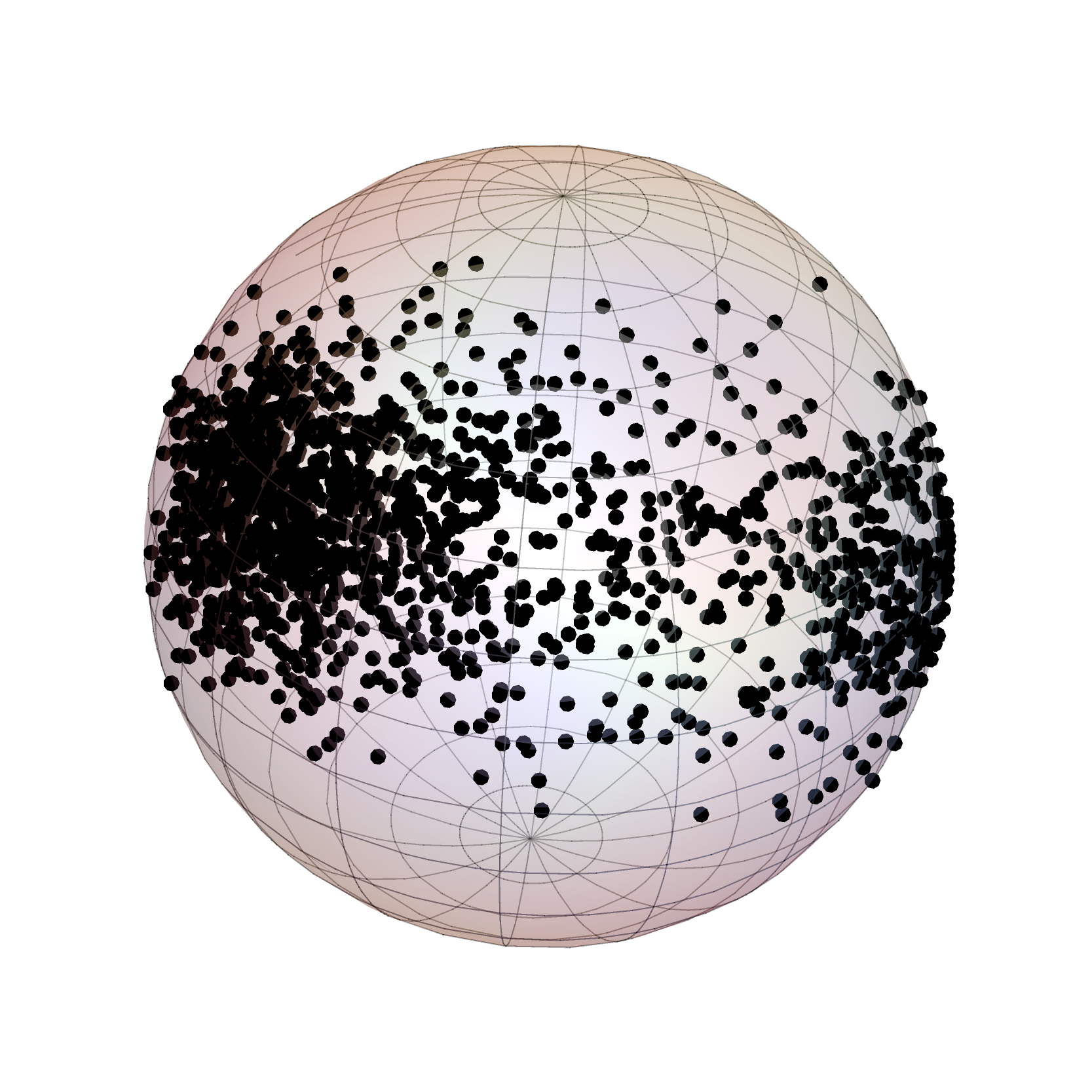}
    \end{minipage}\hfill
    \begin{minipage}[c]{0.22\textwidth}
        \caption{\label{sample4} \\ $\mu=\begin{pmatrix} 11 &3 & 10 \end{pmatrix}^{\top}$ \\ $A= \begin{pmatrix} 2 &-2 &1 \\ -2 & 12& -2 \\ 1 & -2 & 0 \end{pmatrix}$}
    \end{minipage}
    \vspace{-0.5cm}
    \begin{minipage}[c]{0.27\textwidth}
        \includegraphics[width=\textwidth]{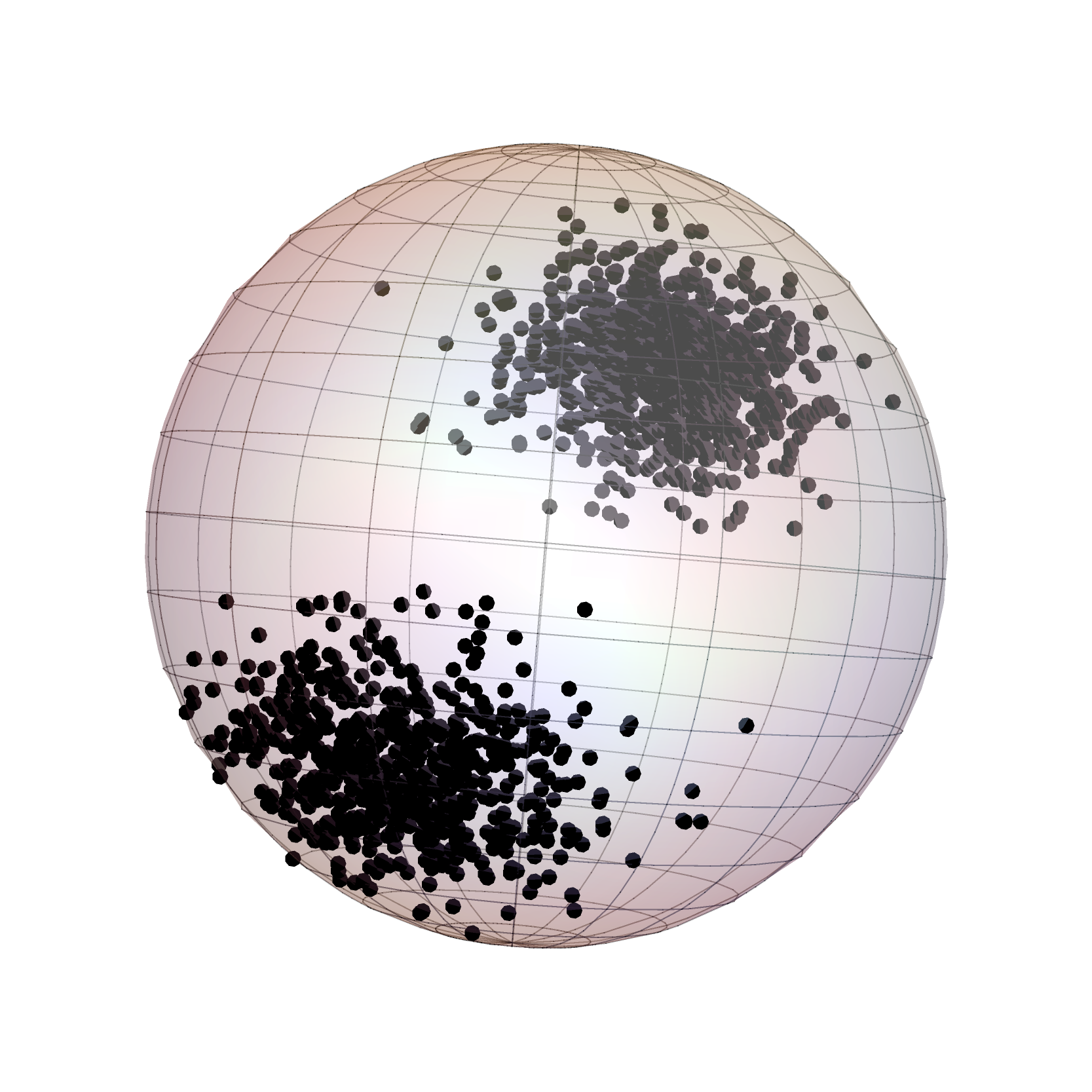}
    \end{minipage}\hfill
    \begin{minipage}[c]{0.22\textwidth}
         \caption{\label{sample5} \\ $\mu= 0.05 \cdot \begin{pmatrix} 1 & 1 & 1 \end{pmatrix}^{\top}$ \\ $A= \begin{pmatrix} 1 &2 &3 \\ 2 & 6& 7 \\ 3 & 7 & 0 \end{pmatrix}$}
    \end{minipage}
    \begin{minipage}[c]{0.27\textwidth}
        \includegraphics[width=\textwidth]{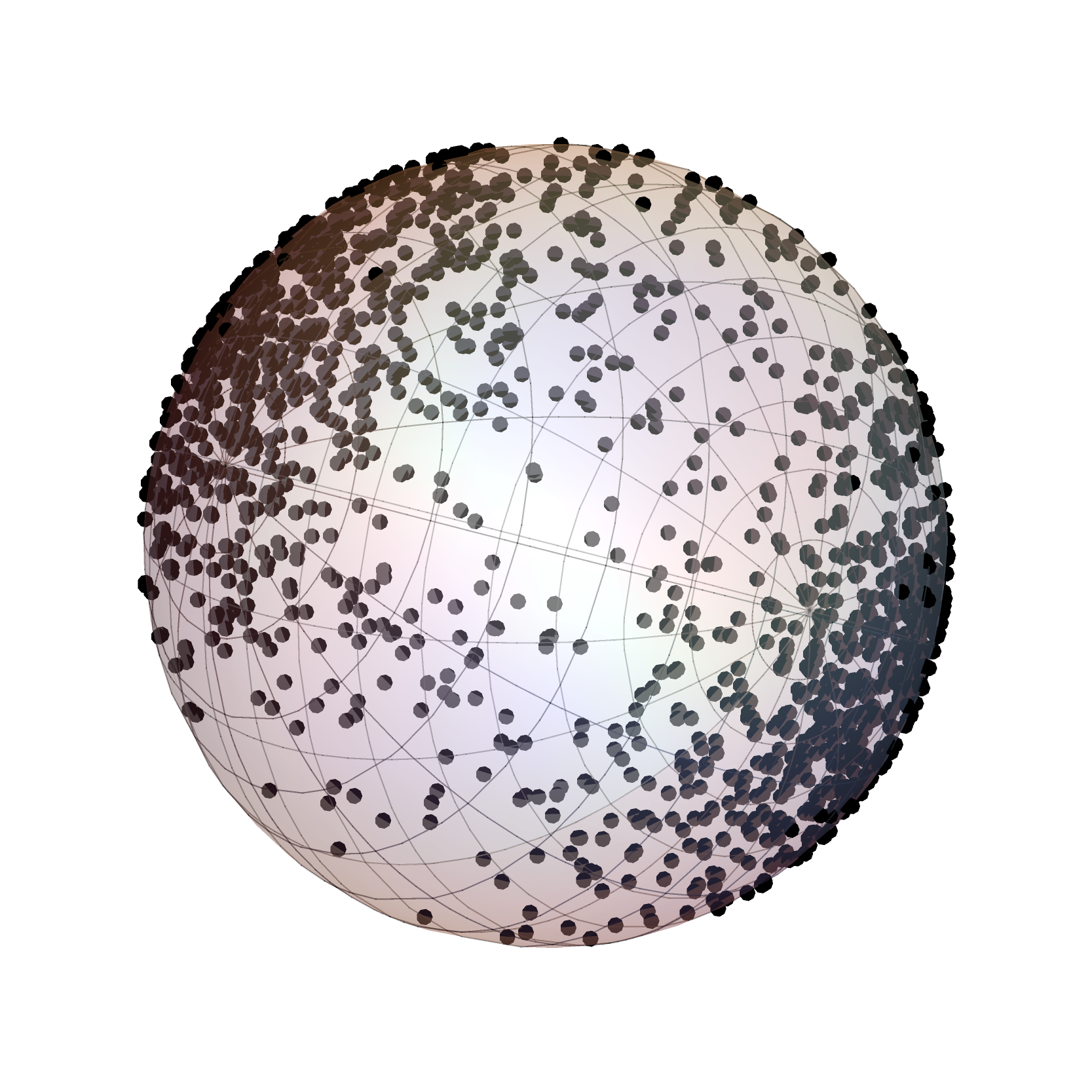}
    \end{minipage}\hfill
    \begin{minipage}[c]{0.22\textwidth}
        \caption{\label{sample6} \\ $\mu=\begin{pmatrix} 0 & 3 & 3 \end{pmatrix}^{\top}$ \\ $A= \begin{pmatrix} 0 & 0 & 0 \\ 0 & 0 & -3 \\ 0 & -3 & 0 \end{pmatrix}$}
    \end{minipage}
    \vspace{-0.5cm}
    \begin{minipage}[c]{0.27\textwidth}
        \includegraphics[width=\textwidth]{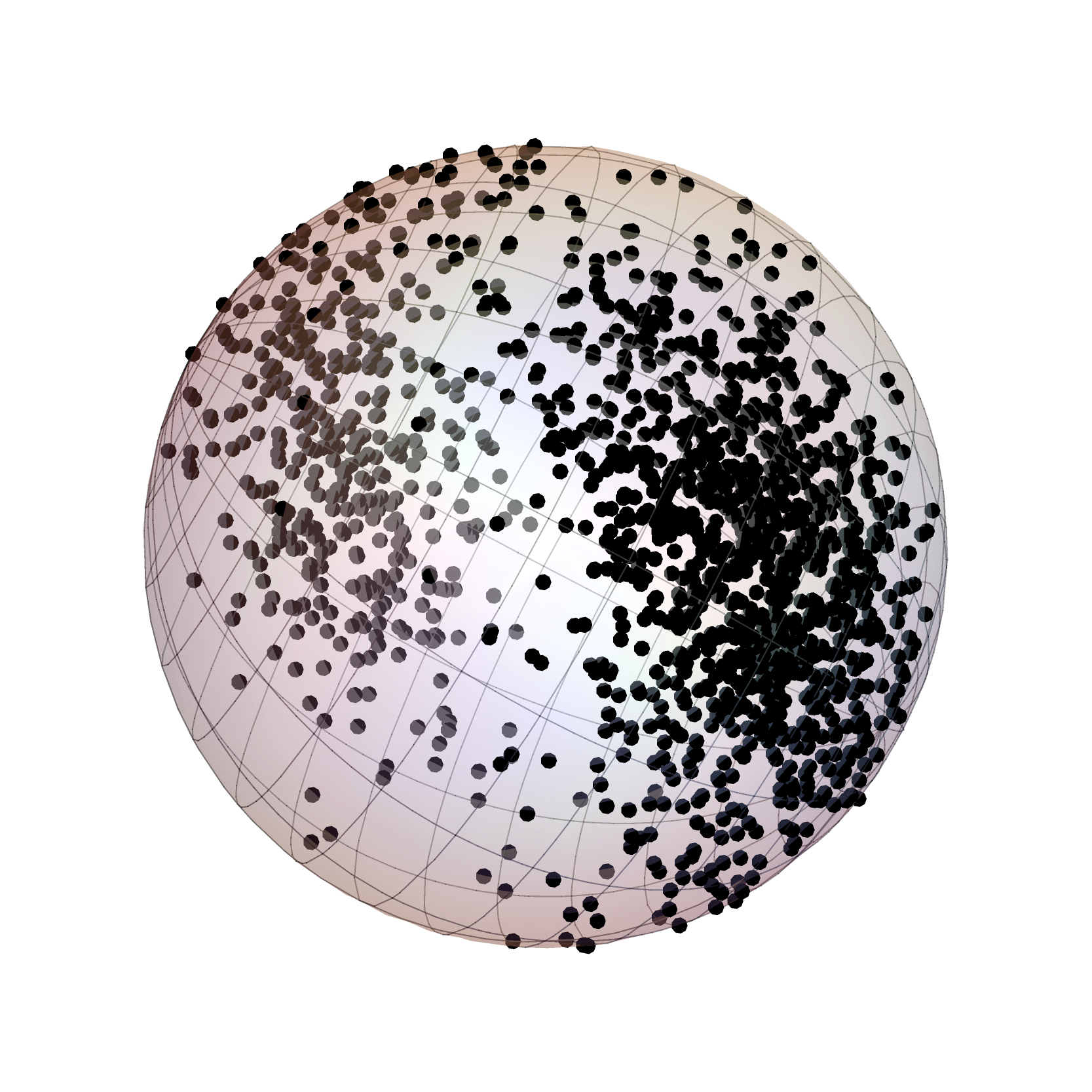}
    \end{minipage}\hfill
    \begin{minipage}[c]{0.22\textwidth}
        \caption{\label{sample7} \\ $\mu=\begin{pmatrix} 0 & 1 & 1 \end{pmatrix}^{\top}$ \\ $A= \begin{pmatrix} -1 & -2 & -3 \\ -2 & 5 & -3 \\ -3 & -3 & 0 \end{pmatrix}$}
    \end{minipage}
    \begin{minipage}[c]{0.27\textwidth}
        \includegraphics[width=\textwidth]{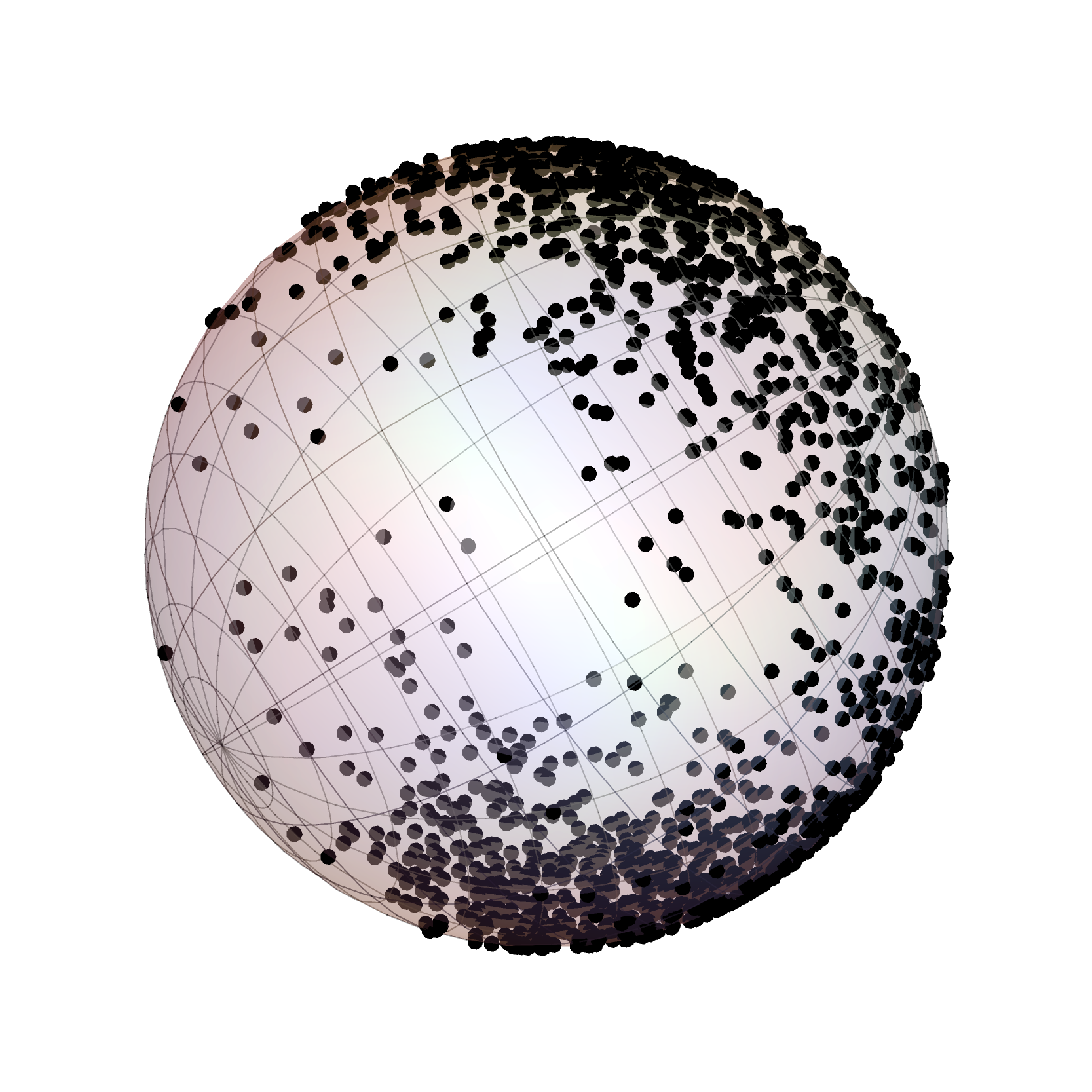}
    \end{minipage}\hfill
    \begin{minipage}[c]{0.22\textwidth}
        \caption{\label{sample8} \\ $\mu=\begin{pmatrix} 0 & -1 & 1 \end{pmatrix}^{\top}$ \\ $A= \begin{pmatrix} -1 & -2 & -3 \\ -2 & 1 &0 \\ -3 & 0 & 0 \end{pmatrix}$}
    \end{minipage}
    \vspace{-0.5cm}
    \begin{minipage}[c]{0.27\textwidth}
        \includegraphics[width=\textwidth]{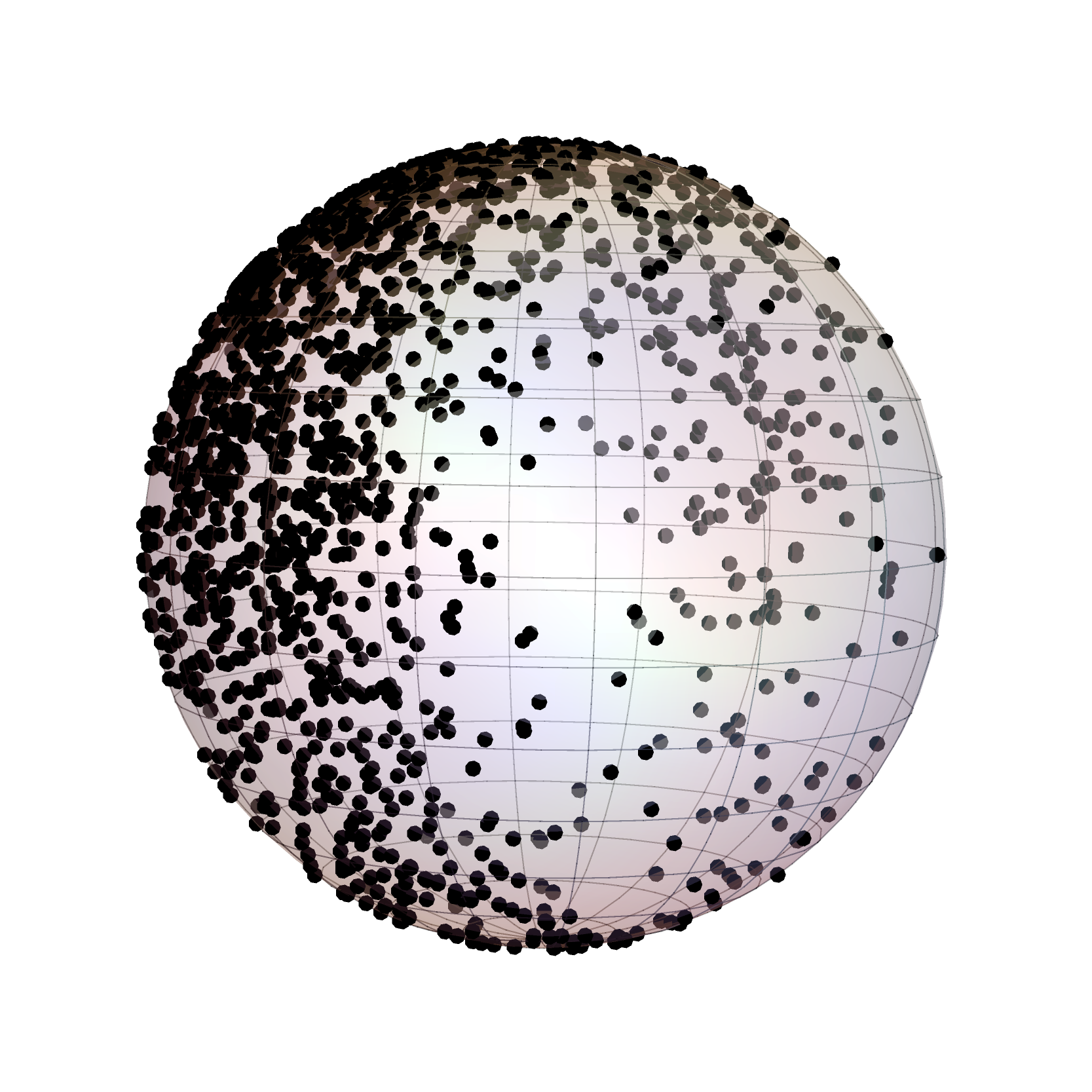}
    \end{minipage}\hfill
    \begin{minipage}[c]{0.22\textwidth}
        \caption{ \label{sample9} \\ $\mu=\begin{pmatrix} 0 &-1 & 1 \end{pmatrix}^{\top}$ \\ $A= \begin{pmatrix} -5 &0 &-1 \\ 0 & 1 & 0 \\ -1 & 0 & 0 \end{pmatrix}$}
    \end{minipage}
    \begin{minipage}[c]{0.27\textwidth}
        \includegraphics[width=\textwidth]{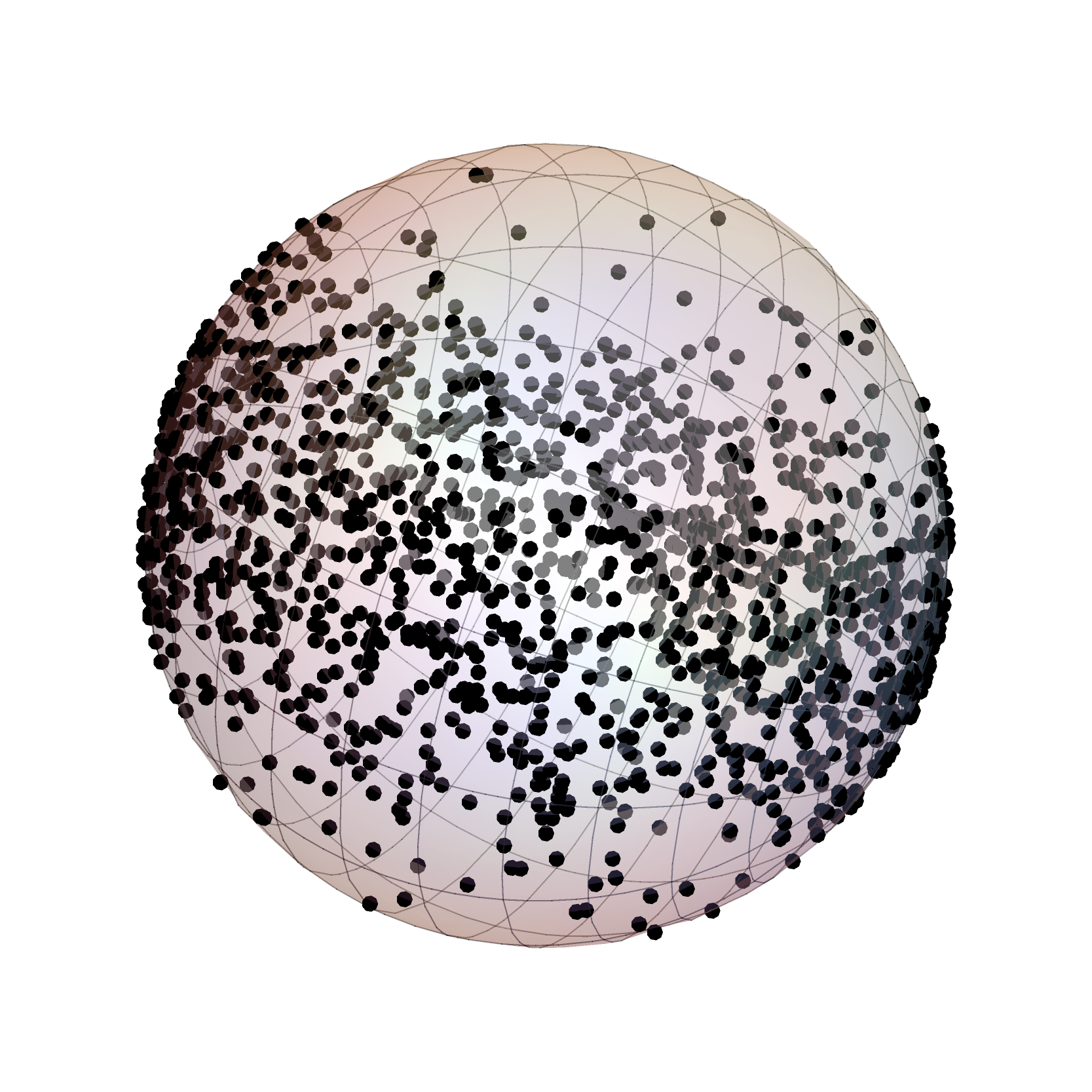}
    \end{minipage}\hfill
    \begin{minipage}[c]{0.22\textwidth}
        \caption{\label{sample10} \\ $\mu=\begin{pmatrix} 11 &3 & 10 \end{pmatrix}^{\top}$ \\ $A= \begin{pmatrix} -6 & 0 & 0 \\ 1 & 0 & 0 \\ 0 & 0 & 0 \end{pmatrix}$}
    \end{minipage}
\end{figure}
In the following, we will write $\hat{\theta}_n^{\mathrm{ST}}=(\hat{\mu}_n^{\mathrm{ST}},\hat{A}_n^{\mathrm{ST}})$ for the Stein estimator obtained with the test functions from above. \par

We briefly discuss other estimation techniques from the literature. Estimating the normalising constant $C(\mu,A)$ is difficult. However, several works have addressed the problem of computing the MLE; we refer to \cite{kume2005saddlepoint} (saddlepoint approximation), \cite{koyama2014holonomic,kume2018exact} (holonomic gradient descent) and \cite{chen2021maximum} (Euler transform). However, we did not find these methods to be natural competitors to our estimator. The saddlepoint approximation in \cite{kume2005saddlepoint} is computationally heavy; parameter estimation is only performed for the vMF distribution. In \cite{nakayama2011holonomic}, the holonomic gradient descent is developed and applied to find the MLE of two real data sets. In \cite{koyama2014holonomic}, an accelerated version of the method is proposed. However, the holonomic gradient descent also requires high computational effort; the authors in \cite{koyama2014holonomic} state that parameter estimation can be performed up to dimension $d=7$. The latter algorithm is adapted in \cite{kume2018exact} to allow for a better dimensional efficiency. \cite{chen2021maximum} implements the same algorithm as in \cite{kume2018exact} but uses an Euler transform together with a Fourier-type integral representation to compute the normalising constant and its derivatives. We implemented the algorithm in \cite{chen2021maximum} and found it to be sensitive to several initial parameters that one has to set as well as to the starting point of the MLE procedure. In addition, we refer to \cite{bee2017approximate} where an approximation of the likelihood function is used to compute the MLE for the Bingham-subfamily. \par
Due to the high computational effort and the sensitivity to initial parameters we were not able to include these estimation techniques in our simulation study. The results concerning the Stein estimator can be found in Table \ref{sim_fisherbingham}. We report the mean squared error (MSE) for both parameters, whereby we calculated for $\mu$ the average Euclidean distance and for $A$ the average spectral norm with respect to all Monte Carlo samples. We see that the Stein estimator gives reasonable results for all parameter constellations. The rather large MSE in the first column can be explained by a parameter identification problem (see Remark \ref{remark_identification_bingham}). The row NE reports the relative frequency (out of $100$) of computational failure regarding the numerical computation of $\hat{\theta}_n^{\mathrm{ST}}$. As can be seen in Table \ref{sim_fisherbingham}, there have been no issues in this respect. We used the R package \textit{simdd} \cite{simddRpackage} in order to generate samples from the Fisher-Bingham distribution.

\begin{Remark} \label{remark_identification_bingham}
    The simulation results from Table \ref{sim_fisherbingham} are conservative: In fact, two distinct distributions from the Fisher-Bingham family can be similar even though their parameter values in $\mu$ and $A$ differ largely. For instance, this seems to be the case when $\Vert \mu \Vert$ is large. Let us illustrate this problem by the following example: We generated a sample of size $n=1000$ with parameter values $\mu_0= \begin{pmatrix} 40 & 0 & 0\end{pmatrix}^{\top}$ and $A_0=0$. The Stein estimator $\hat{\theta}_n^{\mathrm{ST}}$ then gave the following estimates:
    \begin{align*}
        \hat{\mu}_n^{\mathrm{ST}}= \begin{pmatrix} 154.2 \\ 5.5 \\ -10.2  \end{pmatrix}, \quad \hat{A}_n^{\mathrm{ST}} = \begin{pmatrix} -61.3 & -2.9 & 5.4 \\ -2.9 & -1.3 & -0.8 \\ 5.4 &  -0.8  & 0 \end{pmatrix},
    \end{align*}
    which results in large values for $\Vert \hat{\mu}_n^{\mathrm{ST}} - \mu_0  \Vert $ and $\Vert \hat{A}_n^{\mathrm{ST}} - A_0  \Vert $. However, the shapes of the distributions $FB(\hat{\mu}_n^{\mathrm{ST}},\hat{A}_n^{\mathrm{ST}})$ and $FB(\mu_0,A_0)$ appear to be very similar on the sphere. This implies that the values for the MSE in Table \ref{sim_fisherbingham} are rather conservative, reporting large values for the distance between estimated and true parameter although their corresponding distributions are in truth very similar. However, we are not aware of another technique to evaluate the performance of our estimator given the complexity of the normalising constant.
\end{Remark}

\begin{table} 
\centering
\begin{tabular}{c|ccccccccccc}
 & Fig. 1 & Fig. 2 & Fig. 3 & Fig. 4 & Fig. 5 & Fig. 6 & Fig. 7 & Fig. 8 & Fig. 9 & Fig. 10 \\ \hline 
$\hat{\mu}_n^{\mathrm{ST}}$  & $2.51$ & $0.611$ & $0.092$ & $1.17$ & $0.663$ & $0.257$ & $0.294$ & $0.194$ & $0.184$ & $0.143$\\ 
 $\hat{A}_n^{\mathrm{ST}}$  & $1.65$ & $0.545$ & $0.191$ & $1.1$ & $0.835$ & $0.324$ & $0.492$ & $0.343$ & $0.347$ & $0.334$\\ 
 NE  & $0$ & $0$ & $0$ & $0$ & $0$ & $0$ & $0$ & $0$ & $0$ & $0$\\ \hline 
\end{tabular} 
\caption{\label{sim_fisherbingham} MSE for the Fisher-Bingham distribution based on $10{,}000$ Monte Carlo repetitions for a sample size $n=1000$. The different columns correspond to the parameter values in Fig. \ref{sample1} -- Fig. \ref{sample10}.}
\end{table}

\section{von Mises-Fisher distribution} \label{section_fisher_von_mises}
The pdf of the vMF distribution $vM\! F(\mu,\kappa)$, with parameter $\theta=(\mu,\kappa)$ such that $\kappa>0$ and $\mu \in \mathcal{S}^{d-1}$, is given by
\begin{align*}
    p_{\theta}(x)=\frac{\kappa^{d/2-1}}{(2\pi)^{d/2}\mathcal{I}_{d/2-1}(\kappa)} \exp\big(\kappa \mu^{\top} x\big), \quad x \in \mathcal{S}^{d-1},
\end{align*}
where $\mathcal{I}_{\nu}(\cdot)$ denotes the modified Bessel function of the first kind of order $\nu$ (we have used calligraphic $\mathcal{I}$ here in our notation to avoid confusion with our notation for the $d\times d$ identity matrix $I_d$). The vMF distribution is a subclass of the Fisher-Bingham distribution and is retrieved from the latter by choosing $A=0$. Let $X_1,\ldots,X_n \sim vM\! F(\mu_0,\kappa_0) $ be an i.i.d.\ sample defined on a common probability space $(\Omega,\mathcal{F},\mathbb{P})$. A standard estimator for $\mu_0$ is the directional sample mean $\hat{\mu}_n=\overline{X}/\Vert \overline{X} \Vert $ (see \cite[Chapter 10.3]{mardia2000directional}). As per $\kappa$, we develop an estimator based on the Stein identity \eqref{Stein_identity_manifolds}. We therefore choose a test function $f:\mathcal{S}^{d-1} \rightarrow \mathbb{R}^d$ for which each component is an element of $f \in \mathscr{F}$ and consider the equation
\begin{align*}
    \frac{1}{n} \sum_{i=1}^n \mathcal{A}_{\theta}f(X_i) =0.
\end{align*}
Note that in the display above we find $d$ equations although we have a scalar parameter $\kappa$ to estimate. Let $\nabla f$ and $\nabla^2 f$ be defined as in Section \ref{section_fisherbingham}. With
\begin{alignat*}{2}
    Q_n&= \overline{ (d-1) \nabla f(X) X + \nabla^2 f(X) (X \otimes X) - \Delta f(X) } \quad && \in \mathbb{R}^d,  \\
    K_n&=\overline{\nabla f(X) (I_d-XX^{\top})}\hat{\mu}_n \quad && \in \mathbb{R}^d
\end{alignat*}
the estimator for $\kappa$ is the least-squares type estimator
\begin{align*}
    \hat{\kappa}_n=(K_n^{\top}K_n)^{-1}K_n^{\top}Q_n.
\end{align*}
Conditions for consistency and existence can be worked out completely similarly to Theorem \ref{theoem_consistency_fisherbingham}. We found the test function $f(x)=x$ a convenient choice, which leads to
\begin{align} \label{estimator_fisher_von_mises}
    \hat{\kappa}_n=\frac{(d-1)\hat{\mu}_n^{\top}(I_d-\overline{XX^{\top}})\overline{X}}{\hat{\mu}_n^{\top}(I_d-\overline{XX^{\top}})^2\hat{\mu}_n}.
\end{align}
We will write $\hat{\kappa}_n^{\mathrm{ST}}$ for the estimator \eqref{estimator_fisher_von_mises}. It can be shown easily that $\hat{\kappa}_n^{\mathrm{ST}}$ is always greater than zero and independent of $\mu_0$. \par

The asymptotic distribution of the directional sample mean has been investigated in \cite{hendriks1996asymptotic} (see also \cite{hendriks1996asymptotica}). In the following theorem we give the asymptotic covariance for the estimator in \eqref{estimator_fisher_von_mises}.
\begin{Theorem} \label{theorem_asympt_cov_fishervonmises}
    The Stein estimator $\hat{\kappa}_n^{\mathrm{ST}}$ is asymptotically normal, i.e.
    \begin{align*}
        \sqrt{n} \big(  \hat{\kappa}_n^{\mathrm{ST}}-\kappa_0 \big)
        \overset{D}{\longrightarrow} N(0,P),
    \end{align*}
    as $n \rightarrow \infty$, with
    \begin{align*}
        P= \frac{\kappa_0 \mathcal{I}_{d/2-1}(\kappa_0) \big( 2 \kappa_0 \mathcal{I}_{d/2-1}(\kappa_0) - (d+1) \mathcal{I}_{d/2}(\kappa_0)  \big)} {(d-1) \mathcal{I}_{d/2}(\kappa_0)^2}.
    \end{align*}
\end{Theorem}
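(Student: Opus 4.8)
The plan is to exhibit $\hat\kappa_n^{\mathrm{ST}}$ as a smooth function of two empirical moments and then invoke the multivariate central limit theorem together with the delta method. Since $\hat\mu_n=\overline X/\|\overline X\|$, the estimator \eqref{estimator_fisher_von_mises} can be rewritten as $\hat\kappa_n^{\mathrm{ST}}=g\big(\overline X,\overline{XX^\top}\big)$, where, for $m\in\mathbb R^d$ and $S\in\mathbb R^{d\times d}$ symmetric,
\[
  g(m,S)=\frac{(d-1)\,\|m\|\,m^\top(I_d-S)m}{m^\top(I_d-S)^2 m}.
\]
First I would record the population quantities: writing $A_d(\kappa):=\mathcal I_{d/2}(\kappa)/\mathcal I_{d/2-1}(\kappa)$ one has $m_0:=\mathbb E_{\theta_0}[X]=A_d(\kappa_0)\mu_0$ and, by rotational symmetry, $S_0:=\mathbb E_{\theta_0}[XX^\top]=a\,\mu_0\mu_0^\top+b\,(I_d-\mu_0\mu_0^\top)$ with $a=\mathbb E_{\theta_0}[(\mu_0^\top X)^2]$ and $b=(1-a)/(d-1)$. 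Applying the population Stein identity \eqref{Stein_identity_manifolds} to the coordinate functions $f(x)=x$ gives $\kappa_0(1-a)=(d-1)A_d(\kappa_0)$, so that $m_0^\top(I_d-S_0)^2 m_0=A_d(\kappa_0)^2(1-a)^2>0$ (using $\kappa_0>0$); hence $g$ is $C^\infty$ on a neighbourhood of $(m_0,S_0)$ and a direct substitution gives $g(m_0,S_0)=\kappa_0$, which re-proves consistency.

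Next I would apply the classical CLT: since $X$ is bounded, $\sqrt n\big((\overline X,\mathrm{vech}(\overline{XX^\top}))-(m_0,\mathrm{vech}(S_0))\big)\overset{D}{\longrightarrow}N(0,\Sigma)$ with $\Sigma=\mathrm{Cov}_{\theta_0}\big((X,\mathrm{vech}(XX^\top))\big)$, and then the delta method yields $\sqrt n(\hat\kappa_n^{\mathrm{ST}}-\kappa_0)\overset{D}{\longrightarrow}N(0,\nabla g^\top\Sigma\,\nabla g)$ with $\nabla g$ evaluated at $(m_0,S_0)$. It then remains to identify this scalar with $P$.

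The bulk of the work is computing $\Sigma$ and $\nabla g$ and simplifying. By rotational invariance I may take $\mu_0=e_1$ and decompose $X=t\,e_1+\sqrt{1-t^2}\,V$, where $t=e_1^\top X$ has density proportional to $e^{\kappa_0 t}(1-t^2)^{(d-3)/2}$ on $[-1,1]$ and $V$ is uniform on $\mathcal S^{d-2}\subset e_1^\perp$, independent of $t$. All moments of $X$ up to order four then reduce to $\mathbb E[t^k]$ for $k\le4$ together with the explicit low-order moments of the uniform law on $\mathcal S^{d-2}$; the former are expressed through $\mathcal I_{d/2-1}(\kappa_0)$ and $\mathcal I_{d/2}(\kappa_0)$ using $A_d$, the identity $A_d'(\kappa)=1-A_d(\kappa)^2-\tfrac{d-1}{\kappa}A_d(\kappa)$, and the Bessel recurrence $\mathcal I_{\nu-1}(\kappa)-\mathcal I_{\nu+1}(\kappa)=\tfrac{2\nu}{\kappa}\mathcal I_\nu(\kappa)$ to remove higher-order Bessel functions. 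Differentiating $g$ and inserting $m_0=A_d(\kappa_0)e_1$ and the above form of $S_0$ gives $\nabla g$ in closed form, with only a few non-vanishing tensor components thanks to the $e_1$-axial symmetry. Substituting everything into $\nabla g^\top\Sigma\,\nabla g$ and simplifying with the same Bessel identities produces the stated expression for $P$.

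I expect the last step to be the main obstacle: assembling the covariance $\Sigma$ — which requires fourth-order moments of the vMF law — and the gradient $\nabla g$, and then pushing the (rotationally reduced, but still lengthy) algebra far enough that everything collapses to the compact ratio $P=\kappa_0\mathcal I_{d/2-1}(\kappa_0)\big(2\kappa_0\mathcal I_{d/2-1}(\kappa_0)-(d+1)\mathcal I_{d/2}(\kappa_0)\big)\big/\big((d-1)\mathcal I_{d/2}(\kappa_0)^2\big)$. Exploiting the axial symmetry to pass to the scalar variable $t$ and to cut down the number of independent tensor components is what makes the calculation feasible, and the Bessel recurrences are what allow the final answer to be written using only $\mathcal I_{d/2-1}$ and $\mathcal I_{d/2}$.
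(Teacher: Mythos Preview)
Your proposal is correct and follows the same overall strategy as the paper: write $\hat\kappa_n^{\mathrm{ST}}$ as a smooth function of $(\overline X,\overline{XX^\top})$, apply the multivariate CLT to the stacked sample moments, and then invoke the delta method. The function you call $g(m,S)$ is exactly the paper's $G(Z,z)$ with $\ell(z)=z/\|z\|$, and both proofs verify $g(m_0,S_0)=\kappa_0$ and compute $\nabla g^{\top}\Sigma\,\nabla g$ at the population values.

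Where the two differ is only in how the moment calculation is organised. The paper keeps $\mu_0$ general and works out $\mathbb E[XX^{\top}]$, $\mathbb E[\mathrm{vec}(XX^{\top})\mathrm{vec}(XX^{\top})^{\top}]$ and the cross term in full Kronecker/commutation-matrix form, expressing everything through $\mathcal I_{d/2+k}(\kappa_0)$ for $k=-1,0,1,2,3$, and only collapses to $\mathcal I_{d/2-1},\mathcal I_{d/2}$ at the very end. You instead exploit rotational invariance to set $\mu_0=e_1$, use the tangent--normal decomposition $X=t\,e_1+\sqrt{1-t^2}\,V$, and reduce all fourth-order moments to the scalar moments $\mathbb E[t^k]$ combined with the known isotropic moments of $V$; the Bessel recurrence then does the cleanup earlier rather than later. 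Your route cuts the number of independent tensor components dramatically and avoids the commutation-matrix bookkeeping, at the price of having to argue (trivially) that the asymptotic variance is $\mu_0$-invariant. The paper's route is more mechanical and coordinate-free but produces long intermediate displays. Both lead to the same $P$.
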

We now compare our new estimator $\hat{\kappa}_n^{\mathrm{ST}}$ to the MLE $\hat{\kappa}_n^{\mathrm{ML}}$, which is the solution to
\begin{align*}
    \frac{\mathcal{I}_{d/2}(\hat{\kappa}_n^{\mathrm{ML}})}{\mathcal{I}_{d/2-1}(\hat{\kappa}_n^{\mathrm{ML}})}= \Vert \overline{X}
     \Vert.
\end{align*}
The MLE is not explicit and requires numerical methods (we refer to \cite{ley2017modern} for an overview of numerical approximations for the MLE). Simulation results can be found in Table \ref{fishervonmises_sim} for different values of $\kappa_0$ and dimensions of the sphere. We also included the score-matching estimator $\hat{\kappa}_n^{\mathrm{SM}}$ from \cite{mardia2016score}, which is defined as $\hat{\kappa}_n^{\mathrm{SM}}=(d-1)\overline{Y}/(1-\overline{Y^2})$, where $Y_i$ is the first component of the vector $RX_i, \, i=1,\ldots,n$, with $R \in \mathbb{R}^{d \times d}$ orthogonal such that $R\hat{\mu}_n=(1,0,\ldots,0)^{\top}$. We observe that our proposed estimator $\hat{\kappa}_n^{\mathrm{ST}}$ performs well in terms of bias and MSE, outperforming (or equal in performance) the score matching estimator throughout all parameter values and the MLE for most parameter values. For smaller sample sizes we also found $\hat{\kappa}_n^{\mathrm{ST}}$ to be more reliable in comparison to $\hat{\kappa}_n^{\mathrm{ML}}$ and $\hat{\kappa}_n^{\mathrm{SM}}$. The parameter $\mu$ was assumed to be not known and was estimated by the directional sample mean.

\begin{Remark}
The Fisher information with respect to the parameter $\kappa$ of the $vM\! F(\mu, \kappa)$ distribution is given by
\begin{align*}
    \mathscr{I}(\kappa)=1-\frac{\mathcal{I}_{d/2}(\kappa)^2}{\mathcal{I}_{d/2-1}(\kappa)^2} - \frac{(d-1)\mathcal{I}_{d/2}(\kappa)}{\kappa\mathcal{I}_{d/2-1}(\kappa)}.
\end{align*}
It is well-known that the asymptotic variance of $\hat{\kappa}_n^{\mathrm{ML}}$ is given by $1/\mathscr{I}(\kappa_0)$. A plot of the asymptotic variances with respect to $\kappa_0$ of both estimators, $\hat{\kappa}_n^{\mathrm{ML}}$ and $\hat{\kappa}_n^{\mathrm{ST}}$, shows that they are very close with the MLE obviously having the smaller variance since it is asymptotically efficient. In the case $d=2$, the asymptotic variance of the Stein estimator equals the one of $\hat{\kappa}_n^{\mathrm{SM}}$ which is computed in \cite{mardia2016score}. However, the authors do not give a formula for higher dimensions.
\end{Remark}

\begin{Remark}
    We briefly demonstrate another possibility to obtain an estimator for the concentration parameter: Write $\mu'=\kappa \mu$ and solve the  Stein identity for $\mu'$ by considering the test function $f:\mathcal{S}^{d-1} \rightarrow \mathbb{R}^d, x \mapsto x$. The solution is given by $\hat{\mu}'_n=(d-1) (I_d - \overline{XX^{\top}})^{-1} \overline{X}$. Then define the estimator
    \begin{align*}
        \hat{\kappa}_n^{\mathrm{ST2}}= (d-1) \Vert (I_d - \overline{XX^{\top}})^{-1} \overline{X} \Vert.
    \end{align*}
    This estimator seems to behave very similar to $\hat{\kappa}_n^{\mathrm{ST}}$ for both, small and large sample sizes.
\end{Remark}

\begin{table} 
\centering
\begin{tabular}{cc|ccc|ccc}
 $\theta_0$ & & \multicolumn{3}{|c}{Bias} & \multicolumn{3}{|c}{MSE} \\ \hline
 & & $\hat{\theta}_n^{\mathrm{ML}}$ & $\hat{\theta}_n^{\mathrm{SM}}$ &  $\hat{\theta}_n^{\mathrm{ST}}$ & $\hat{\theta}_n^{\mathrm{ML}}$ & $\hat{\theta}_n^{\mathrm{SM}}$ & $\hat{\theta}_n^{\mathrm{ST}}$  \\ \hline
\multirow{1}{*}{$d=3,\kappa_0=1$} & $\kappa$  & $0.044$ & $0.046$ & \hl $\hl 0.043$ & \hl $\hl 0.039$ & $0.041$ & $0.041$ \\
\multirow{1}{*}{$d=3,\kappa_0=2$} & $\kappa$  & $0.046$ & $0.048$ & \hl $\hl 0.043$ & \hl $\hl 0.063$ & $0.07$ & $0.069$ \\
\multirow{1}{*}{$d=3,\kappa_0=10$} & $\kappa$  & $0.219$ & $0.211$ & \hl $\hl 0.201$ & \hl $\hl 1.14$ & $1.15$ & $1.15$ \\
\multirow{1}{*}{$d=3,\kappa_0=50$} & $\kappa$   & $0.938$ & $0.929$ & \hl $\hl 0.919$ & \hl $\hl 28.1$ & \hl $\hl 28.1$ & \hl $\hl 28.1$ \\
\multirow{1}{*}{$d=10,\kappa_0=1$} & $\kappa$ & $0.402$ & $0.402$ & \hl $\hl 0.4$ & $0.241$ & $0.241$ & \hl $\hl 0.24$ \\
\multirow{1}{*}{$d=10,\kappa_0=10$} & $\kappa$  & $0.183$ & $0.179$ & \hl $\hl 0.173$ & \hl $\hl 0.412$ & $0.414$ & \hl $\hl 0.412$ \\
\multirow{1}{*}{$d=10,\kappa_0=50$} & $\kappa$& $0.696$ & $0.69$ & \hl $\hl 0.684$ & $6.71$ & $6.7$ & \hl $\hl 6.69$ \\
\multirow{1}{*}{$d=20,\kappa_0=1$} & $\kappa$  & \hl $\hl 1.23$ & \hl $\hl 1.23$ & \hl $\hl 1.23$ & \hl $\hl 1.64$ & \hl $\hl 1.64$ & \hl $\hl 1.64$ \\
\multirow{1}{*}{$d=20,\kappa_0=10$} & $\kappa$   & $0.327$ & $0.326$ & \hl $\hl 0.323$ & \hl $\hl 0.454$ & $0.457$ & \hl $\hl 0.454$ \\
\multirow{1}{*}{$d=20,\kappa_0=50$} & $\kappa$ & $0.666$ & $0.661$ & \hl $\hl 0.656$ & \hl $\hl 3.71$ & \hl $\hl 3.71$ & \hl $\hl 3.71$ \\  \hline
\end{tabular} 
\caption{\label{fishervonmises_sim} Simulation results for the $vM\! F(\mu,\kappa)$ distribution for $\mu_0$ proportional to $(1,\ldots,1)$, sample size $n=100$, and $10{,}000$ repetitions.}
\end{table}
\section{Watson distribution} \label{section_watson}
The pdf of the Watson distribution $W(\mu,\kappa)$, with parameter $\theta=(\mu,\kappa)$ such that $\kappa \in \mathbb{R}$ and $\mu \in \mathcal{S}^{d-1}$, is given by
\begin{align*}
    p_{\theta}(x)=\frac{\Gamma(d/2)}{2\pi^{d/2} \, _1F_1(1/2;d/2;\kappa)} \exp\big(\kappa (\mu^{\top} x)^2\big), \quad x \in \mathcal{S}^{d-1}.
\end{align*}
If $\kappa=0$, the parameter $\mu$ is no longer identifiable and we obtain the uniform distribution on the sphere. The Watson distribution is a subfamily of the Fisher-Bingham distribution if we allow the concentration parameter to be equal to $0$. Let $\mu^{\mathrm{FB}}$, $A^{\mathrm{FB}}$ be the parameters in the Fisher-Bingham parametrization. Then the Watson distribution with parameters $\kappa$, $\mu$ as above is obtained by choosing $\mu^{\mathrm{FB}}=0$ and $A^{\mathrm{FB}}=\kappa \mu \mu^{\top}$. \par

Suppose now that we are given an i.i.d.\ sample $X_1,\ldots,X_n \sim W(\mu_0, \kappa_0)$ defined on a common probability space $(\Omega,\mathcal{F},\mathbb{P})$. \par

A suitable estimator for $\mu_0$ is the MLE which is given by the largest resp.\ smallest eigenvector of the scatter matrix $S_n=n^{-1}\sum_{i=1}^n X_i X_i^{\top}$ if $\kappa_0>0$ resp.\ $\kappa_0<0$ (see, for example, \cite{sra2013multivariate}, we also refer to \cite{dey2022inference}, in which the authors develop a new class of equivariant estimators that yield restricted MLE and a Bayesian estimator under some non-informative prior). We take the latter estimator and write $\hat{\mu}_n^{\bullet}$ with $\bullet= (+)$ for the largest and $\bullet= (-)$ for the smallest eigenvector of $S_n$. Given $\hat{\mu}_n^{\bullet}$ we now propose an estimator $\hat{\kappa}_n$ based on the Stein operator.

Let $f:\mathcal{S}^{d-1} \rightarrow \mathbb{R}^{d(d+1)/2-1}$ be a test function with all components in the class $\mathscr{F}$, where again $\nabla f$ and $\nabla^2 f$ are defined as in Section \ref{section_fisherbingham}. Given $\hat{\mu}_n^{\bullet}$, we define the quantities
\begin{alignat*}{2}
    V_n&= \overline{ (d-1) \nabla f(X) X + \nabla^2 f(X) (X \otimes X) - \Delta f(X) } \quad && \in \mathbb{R}^{d(d+1)/2-1 }, \\
    J_n^{\bullet}&=2 \overline{\nabla f(X) (I_d-XX^{\top})\hat{\mu}_n^{\bullet}(\hat{\mu}_n^{\bullet})^{\top}X} \quad && \in \mathbb{R}^{d(d+1)/2-1 }.
\end{alignat*}
We then propose a least-squares type estimator by
\begin{align*}
    \hat{\kappa}_n^{\bullet}=((J_n^{\bullet})^{\top}J_n^{\bullet})^{-1}(J_n^{\bullet})^{\top}V_n.
\end{align*}
The Stein estimator $\hat{\theta}_n=(\hat{\mu}_n,\hat{\kappa}_n)$ is now defined as follows: Calculate ($\hat{\mu}_n^{(-)},\hat{\kappa}_n^{(-)})$ as well as $(\hat{\mu}_n^{(+)},\hat{\kappa}_n^{(+)})$. If neither of the two estimates is eligible ($\hat{\kappa}_n^{(-)}>0$ and $\hat{\kappa}_n^{(+)}<0$), $\hat{\theta}_n$ does not exist; if exactly one of the estimators is eligible, we choose the latter, and if both are eligible, we choose the one for which $\Vert J_n^{\bullet}\hat{\kappa}_n^{\bullet} - V_n \Vert $ is smaller. Conditions for consistency and existence can be worked out completely similarly to Theorem \ref{theoem_consistency_fisherbingham}. We only show that this estimator is asymptotically normal and do not work out the covariance matrix given its complicated structure. We let
\begin{align*}
    Y_n=\begin{pmatrix} V_n \\  \overline{X^{\top} \otimes \big(\nabla f_2(X) \big(I_d-XX^{\top}\big) \big)} \\[3pt] \mathrm{vec}\big(\overline{XX^{\top}} \big) \end{pmatrix}.
\end{align*}
\begin{Theorem} \label{theorem_watson_asyp_norm}
    Let $X \sim W(\mu_0,\kappa_0)$. Suppose that $\mathrm{Var}[Y_1]$ exists and is invertible. Moreover, suppose that $ J^{\top}J \neq 0$, where
    \begin{align*}
        J=\mathbb{E}\big[\nabla f_2(X) (I_d-XX^{\top})\mu_0 \mu_0^{\top}X\big],
    \end{align*}
   and assume that the two largest resp.\ smallest eigenvalues of $\mathbb{E}[XX^{\top}]$ are distinct for $\kappa_0\geq 0$ resp.\ $\kappa_0\leq 0$. Then the sequence  $ \sqrt{n} (  \hat{\kappa}_n-\kappa_0) $ converges in distribution to a mean zero normal distribution as $n \rightarrow \infty$.
\end{Theorem}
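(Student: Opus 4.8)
\emph{Proof plan.} The plan is to deduce the statement from a multivariate central limit theorem for $Y_n$ via the delta method, after checking that the ``correct'' branch of the estimator is strongly consistent and is the one selected with probability tending to one. Write $\bullet_0=(+)$ if $\kappa_0>0$ and $\bullet_0=(-)$ if $\kappa_0<0$; note the eigenvalue hypothesis in the theorem forces $\kappa_0\neq0$, since $\mathbb{E}[XX^\top]=d^{-1}I_d$ when $\kappa_0=0$.

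First I would record the structure of $\Sigma:=\mathbb{E}[XX^\top]$. By the rotational symmetry of $W(\mu_0,\kappa_0)$ about the axis $\mu_0$ together with the antipodal symmetry, $\Sigma=a\,\mu_0\mu_0^\top+b\,(I_d-\mu_0\mu_0^\top)$ for scalars $a,b$ with $a+(d-1)b=1$; hence $\mu_0$ spans an eigenline of $\Sigma$, associated with its largest eigenvalue when $\kappa_0>0$ and with its smallest when $\kappa_0<0$, and the hypothesis that the two largest (resp.\ smallest) eigenvalues of $\Sigma$ are distinct says precisely that this eigenvalue is simple. Since the spectral projector onto the eigenline of a simple extreme eigenvalue depends real-analytically on a symmetric matrix in a neighbourhood of $\Sigma$, the strong law of large numbers ($S_n=\overline{XX^\top}\to\Sigma$ a.s.) gives $\hat\mu_n^{\bullet_0}(\hat\mu_n^{\bullet_0})^\top\to\mu_0\mu_0^\top$ a.s.; the sign ambiguity of $\hat\mu_n^{\bullet_0}$ is immaterial because it enters all relevant quantities only through this rank-one projector. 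Combining with the strong law for $V_n$ and for the averages constituting $J_n^{\bullet_0}$ yields $V_n\to\mathbb{E}[V_1]$ and $J_n^{\bullet_0}\to 2J$ a.s. Plugging the Watson score $\nabla\log p_{\theta_0}(x)=2\kappa_0(\mu_0^\top x)\mu_0$ into the Stein identity \eqref{Stein_identity_manifolds} with the chosen test function gives the population identity $\mathbb{E}[V_1]=2\kappa_0 J$, so, using $J^\top J\neq0$ and continuity of the least-squares formula at the limit, $\hat\kappa_n^{\bullet_0}\to\kappa_0$ a.s.; this is the Watson analogue of the consistency statement of Theorem~\ref{theoem_consistency_fisherbingham}. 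I would then observe that, because $\hat\kappa_n^{\bullet_0}\to\kappa_0\neq0$, the branch $\bullet_0$ is eligible for all large $n$ almost surely, and that (by the same reasoning as in the proof of Theorem~\ref{theoem_consistency_fisherbingham}) it is the one selected with probability tending to one: the tie-break residual $\|J_n^{\bullet}\hat\kappa_n^{\bullet}-V_n\|$ along $\bullet_0$ tends to $\|2\kappa_0 J-2\kappa_0 J\|=0$ by the Stein identity, which the competing branch cannot match. Thus $\mathbb{P}(\hat\kappa_n=\hat\kappa_n^{\bullet_0})\to1$ and it suffices to prove asymptotic normality of $\hat\kappa_n^{\bullet_0}$.

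For the core step I would write $\hat\kappa_n^{\bullet_0}=g(Y_n)$, where $g$ (i) reshapes the last block of $Y_n$ into $S_n$ and extracts the spectral projector $P_n$ onto its leading (resp.\ trailing) eigenline; (ii) contracts the middle block of $Y_n$ against $\mathrm{vec}(P_n)$ --- a bilinear operation --- and doubles it to obtain $J_n^{\bullet_0}$; and (iii) returns $((J_n^{\bullet_0})^\top J_n^{\bullet_0})^{-1}(J_n^{\bullet_0})^\top$ applied to the first block $V_n$. Each of these operations is differentiable in a neighbourhood of $\mathbb{E}[Y_1]$: step (i) by the simple-eigenvalue hypothesis, step (iii) because $J^\top J\neq0$ keeps the scalar Gram factor bounded away from zero near the limit; and $g(\mathbb{E}[Y_1])=\kappa_0$ by the consistency computation. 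Since $\mathrm{Var}[Y_1]$ exists, the multivariate central limit theorem gives $\sqrt{n}(Y_n-\mathbb{E}[Y_1])\overset{D}{\longrightarrow}N(0,\mathrm{Var}[Y_1])$, and the delta method then gives $\sqrt{n}(\hat\kappa_n^{\bullet_0}-\kappa_0)\overset{D}{\longrightarrow}N(0,\nabla g(\mathbb{E}[Y_1])^\top\mathrm{Var}[Y_1]\,\nabla g(\mathbb{E}[Y_1]))$; combined with the selection step and Slutsky's lemma this yields the claim.

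The main obstacle, I expect, is the two non-routine instances of differentiability of $g$: the eigenprojector extraction, which needs the simple-eigenvalue hypothesis and a careful handling of the sign of $\hat\mu_n^{\bullet_0}$ (avoided by working only with the projector $\hat\mu_n^{\bullet_0}(\hat\mu_n^{\bullet_0})^\top$), and the branch-selection argument, which rests on the Stein identity forcing the limiting least-squares residual along the correct branch to vanish exactly. Everything else is routine strong-law/CLT/delta-method bookkeeping, for which the finite-second-moment assumption on $Y_1$ is exactly what is needed; invertibility of $\mathrm{Var}[Y_1]$ is used only to guarantee that the limiting variance is non-degenerate.
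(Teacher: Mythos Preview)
Your proposal is correct and follows essentially the same approach as the paper's proof: apply the multivariate CLT to $Y_n$, argue that the correct branch $\bullet_0$ is selected with probability tending to one, and conclude via the delta method using differentiability of the extreme-eigenvector (equivalently, eigenprojector) map at $\mathbb{E}[Y_1]$. Your treatment is in fact more detailed than the paper's, which simply cites a matrix-calculus reference for the eigenvector differentiability and appeals to consistency in the sense of Theorem~\ref{theoem_consistency_fisherbingham} for the branch-selection step.
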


Notice that the assumptions made in Theorem \ref{theorem_watson_asyp_norm} entail that $\kappa_0 \neq 0$ since we have $\mathbb{E}[XX^{\top}]=d^{-1} I_d$ for $\kappa_0=0$ and the eigenvalues cannot be distinguished.  \par
Motivated by the estimator of the matrix $A$ in the case of the Fisher-Bingham distribution, we choose the test function $f_2(x)=\mathrm{vech}'(xx^{\top})$. In the following, we will write $\hat{\kappa}_n^{\mathrm{ST}}$ for the latter choice of the test function. We also consider two other estimators. First, in \cite{sra2013multivariate} the authors develop an explicit approximation of the MLE. In fact, they show that, given the correct sign of $\kappa_0$, the MLE $\hat{\kappa}_n^{\mathrm{ML}}$ always lies between 
\begin{align*}
    L\big(\hat{\mu}_n^{\top} S \hat{\mu}_n, 1/2,d/2\big) < \hat{\kappa}_n^{\mathrm{ML}} < U\big(\hat{\mu}_n^{\top} S \hat{\mu}_n, 1/2,d/2\big),
\end{align*}
where 
\begin{align*}
    L(r,a,c)=\frac{rc-a}{r(1-r)} \bigg( 1+ \frac{1-r}{c-a} \bigg) 
 ,\quad
    U(r,a,c)=\frac{rc-a}{r(1-r)} \bigg( 1+ \frac{r}{a} \bigg).
\end{align*}
The bounds are asymptotically sharp. Therefore, we include the estimator defined by
\begin{align*}
    (\hat{\kappa}_n^{\mathrm{MLa}})^{\bullet}= \frac{1}{2} \Big(L\big((\hat{\mu}_n^{\bullet})^{\top} S \hat{\mu}_n^{\bullet}, 1/2,d/2\big) + U\big((\hat{\mu}_n^{\bullet})^{\top} S \hat{\mu}_n^{\bullet}, 1/2,d/2\big)  \Big)
\end{align*}
in our simulation study. The question of how to choose the correct eigenvector of the scatter matrix is tackled the same way as for $\hat{\theta}_n^{\mathrm{ST}}$ except: If $( \hat{\mu}_n^{(-)}, (\hat{\kappa}_n^{\mathrm{MLa}})^{(-)})$ and $(\hat{\mu}_n^{(+)}, (\hat{\kappa}_n^{\mathrm{MLa}})^{(+)})$ are eligible, we chose the one with the higher likelihood. We denote the resulting estimator by $\hat{\theta}_n^{\mathrm{MLa}}$. \par
Next, we included the MLE via the \textit{watson} R package \cite{watsonRpackage} which implements an EM algorithm that also works for finite mixtures of the Watson distribution. This estimator is denoted by $\hat{\theta}_n^{\mathrm{EM}}$. 

Simulation results are given in Table \ref{watson_sim} for a variety of dimensions and parameter values for $\kappa_0$. We only report the results for $\kappa$ since the methods do not differ in estimating $\mu$. However, we also included a column NE reporting as before the estimated relative frequency that the estimator does not exist (which means in this case that the value of $\mu$ and the sign of $\kappa$ do not match). As can be observed from Table \ref{watson_sim}, the EM estimator $\hat{\theta}_n^{\mathrm{EM}}$ seems to be globally the best in terms of MSE, whilst the Stein estimators performs globally best in terms of bias. The Stein estimator also outperforms $\hat{\theta}_n^{\mathrm{MLa}}$ for most parameter constellations in terms of MSE.

\begin{table} 
\centering
\begin{tabular}{cc|ccc|ccc|ccc}
 $\theta_0$ & & \multicolumn{3}{|c}{Bias} & \multicolumn{3}{|c}{MSE} & \multicolumn{3}{|c}{NE} \\ \hline
 & & $\hat{\theta}_n^{\mathrm{MLa}}$ &  $\hat{\theta}_n^{\mathrm{EM}}$ & $\hat{\theta}_n^{\mathrm{ST}}$ & $\hat{\theta}_n^{\mathrm{MLa}}$ & $\hat{\theta}_n^{\mathrm{EM}}$ & $\hat{\theta}_n^{\mathrm{ST}}$ & $\hat{\theta}_n^{\mathrm{MLa}}$ & $\hat{\theta}_n^{\mathrm{EM}}$ & $\hat{\theta}_n^{\mathrm{ST}}$ \\ \hline
\multirow{1}{*}{$d=3,\kappa_0=-20$} & $\kappa$  & $-9.7$ & $-0.834$ & \hl $\hl -0.792$ & $114$ & \hl $\hl 9.82$ & $9.85$ & \multirow{1}{*}{$1$} & \multirow{1}{*}{$0$} & \multirow{1}{*}{$0$} \\  
\multirow{1}{*}{$d=3,\kappa_0=-10$} & $\kappa$  & $-4.29$ & $-0.423$ & \hl $\hl -0.384$ & $23.6$ & \hl $\hl 2.49$ & $2.52$ & \multirow{1}{*}{$0$} & \multirow{1}{*}{$0$} & \multirow{1}{*}{$0$} \\
\multirow{1}{*}{$d=10,\kappa_0=-10$} & $\kappa$  & $-3.28$ & \hl $\hl -2.43$ & $-3.26$ & $18.4$ & \hl $\hl 12.2$ & $22.1$ & \multirow{1}{*}{$0$} & \multirow{1}{*}{$0$} & \multirow{1}{*}{$0$} \\ 
\multirow{1}{*}{$d=10,\kappa_0=-2$} & $\kappa$  & $-1.27$ & $-1.1$ & \hl $\hl -0.831$ & $16.7$ & \hl $\hl 15.6$ & $18.3$ & \multirow{1}{*}{$0$} & \multirow{1}{*}{$0$} & \multirow{1}{*}{$0$} \\ 
\multirow{1}{*}{$d=20,\kappa_0=-2$} & $\kappa$  & $-15.3$ & $-14.7$ & \hl $\hl -12.7$ & $309$ & \hl $\hl 288$ & $356$ & \multirow{1}{*}{$0$} & \multirow{1}{*}{$0$} & \multirow{1}{*}{$0$} \\ 
\multirow{1}{*}{$d=3,\kappa_0=1$} & $\kappa$  & \hl $\hl -0.262$ & $-0.269$ & $-0.303$ & $1$ & \hl $\hl 0.955$ & $0.994$ & \multirow{1}{*}{$0$} & \multirow{1}{*}{$0$} & \multirow{1}{*}{$0$} \\ 
\multirow{1}{*}{$d=10,\kappa_0=1$} & $\kappa$  & $-3.48$ & $-3.34$ & \hl $\hl -3.22$ & $27.7$ & \hl $\hl 26.1$ & $27.4$ & \multirow{1}{*}{$0$} & \multirow{1}{*}{$0$} & \multirow{1}{*}{$0$} \\ 
\multirow{1}{*}{$d=20,\kappa_0=5$} & $\kappa$  & $-13.7$ & $-13.3$ & \hl $\hl -9.01$ & $393$ & $373$ & \hl $\hl 297$ & \multirow{1}{*}{$0$} & \multirow{1}{*}{$0$} & \multirow{1}{*}{$0$} \\ 
\multirow{1}{*}{$d=3,\kappa_0=10$} & $\kappa$  & $7.44$ & $0.188$ & \hl $\hl 0.067$ & $59.1$ & \hl $\hl 0.927$ & $0.955$ & \multirow{1}{*}{$0$} & \multirow{1}{*}{$0$} & \multirow{1}{*}{$0$} \\ 
\multirow{1}{*}{$d=10,\kappa_0=20$} & $\kappa$  & $13.8$ & $0.216$ & \hl $\hl -0.087$ & $194$ & $0.881$ & \hl $\hl 0.876$ & \multirow{1}{*}{$0$} & \multirow{1}{*}{$0$} & \multirow{1}{*}{$0$} \\ \hline
\end{tabular}
\caption{\label{watson_sim} Simulation results for the $W(\mu,\kappa)$ distribution for $\mu_0$ proportional to $(1,\ldots,1)$ with sample size $n=100$, and $10{,}000$ repetitions.}
\end{table}

\section{Real data example} \label{section_real_data_example}
We conclude the paper with a real data example from machine learning. An \textit{autoencoder} consists of two artificial neural networks, an encoder and a decoder that are connected in series (we refer to \cite[Chapter 14]{goodfellow2016deep} for a comprehensive introduction). The encoder converts the input data into a representation on a latent space and the decoder reconstructs the original input from the latent space. Autoencoders are, for example, used for dimension reduction or regularization. Meanwhile, there are many modifications of autoencoders, for example the \textit{variational autoencoder} where the input is not mapped directly onto the latent space: Instead, the final layer of the encoder represents the parametrization of a probability distribution on the latent space and then a random variable sampled according to the parameter values from the encoder output is used to obtain a point on the latent space.

Variational autoencoders can also be seen as a variational Bayesian method in which the decoder computes the likelihood distribution of the dataset (conditional on the latent space representation) and the encoder computes the approximated posterior distribution (the distribution on the latent space given the data). Moreover, due to their probabilistic nature, variational autoencoders are generative models. We refer to \cite{cinelli2021variational,kingma2019introduction} for an introduction to the concept of variational autoencoders. For most applications, the Euclidean space together with a multivariate normal distribution $N(\mu, \Sigma)$ is used to model the distribution of the encoded data. However, in \cite{davidson2018hyperspherical} the authors suggest to use hyperspherical latent spaces together with the vMF distribution as it allows for a truly uninformative prior. Based on their approach, \cite{davidson2018hyperspherical} used the Fisher-Bingham distribution to estimate the latent space representations of the MNIST dataset and then applied the decoder to sampled random variables from the estimated distributions in order to generate handwritten numbers. In \cite{zhao2019latent}, the authors remove the probabilistic part of the model and use a purely deterministic autoencoder to obtain latent space representations. 

In our approach, we combine the latter two methods with some additional modifications: We use a purely deterministic autoencoder to obtain latent space representations and learn the spherical encodings for each digit separately instead of using the same autoencoder for the whole dataset. Next, we employ the new estimator from Section \ref{section_fisherbingham} to fit the Fisher-Bingham distribution to each latent representation and then sample from the estimated distributions to generate handwritten digits via the decoder.  \par

The MNIST dataset set consists of labeled (from $0$ to $9$) images with handwritten numbers ($60{,}000$ training and $10{,}000$ test images) where each image has a dimension of $28 \times 28$ pixels and each pixel corresponds to a value between $0$ (black) and $1$ (white). As already mentioned above, we divided the training as well as the test set into different groups corresponding to the label of the image and then trained an autoencoder separately for each of the $10$ groups. However, the setup for each autoencoder was the same: For the encoder, we used next to the input layer of $784$ nodes two hidden layers ($256$ and $128$ nodes) and an output layer of $6$ nodes. The decoder was implemented with the same (reversed) structure. We used the ReLU activation function for all layers, except for the last layer of the encoder and decoder, respectively. Regarding the output layer of the decoder, we implemented the sigmoid function and for the output layer of the encoder, we employed the function
\begin{align*}
    x \mapsto \frac{x - \overline{x}}{\Vert x - \overline{x} \Vert},
\end{align*}
which was proposed by \cite{zhao2019latent}. This transformation yields a unit vector and therefore the latent representation of the handwritten digits. Note that through the centralization we loose one dimension: If the encoder output layer consists of $d$ nodes, then we obtain latent space representations in $\mathcal{S}^{d-2}$. Each of the $10$ models was trained for $1500$ epochs and then we applied early stopping with a look-ahead of $50$ epochs. Weights were updated according to the Adam algorithm \cite{kingma2014adam} (available in the \textit{PyTorch} library) with respect to the $\mathscr{L}^1$-loss function and the weights were initialized following \cite{glorot2010understanding}. We implemented the autoencoders in Python whereby the code made available in \cite{davidson2018hyperspherical} served as a basis. \par

The generated handwritten numbers from the approach described above can be found in Figure \ref{image_fb}. Let us consider the natural question of whether the flexibility gained by the Fisher-Bingham distribution improves on the quality of the generated numbers. For this purpose, we fitted vMF distributions to the spherical representations of the $10$ groups. The generated numbers according to the vMF distribution can be found in Figure \ref{image_vMF}. Comparing both images, one realizes that both distributions yield good results whereby the Fisher-Bingham distribution seems to represent the latent variables more accurately (compare rows $1,2$ and $6$). This is as expected since the class of Fisher-Bingham distributions is much broader. To underline this issue, we trained our model with an encoder output dimension of $4$ (resulting in the latent space $\mathcal{S}^{2}$) and plotted the corresponding latent representations in Figure \ref{image_1} and \ref{image_6}. It is obvious that both sets do not reflect a vMF distribution. However, the goodness-of-fit for the Fisher-Bingham distribution can also be questioned; a truncated uniform distribution may be more accurate. This might be an explanation for the incorrectly drawn $8$'s in Figure \ref{image_fb}. Nevertheless, these issues are beyond the scope of this paper. Moreover, with our model, there are no concerns with overlapping latent representations since different digits were learned separately. In \cite{chen2021maximum}, the authors reported difficulties in this regard (especially for low dimensions of the latent space) resulting in the problem that a generated number may not match the number represented by the probability distribution from which a random variable was drawn.

\begin{figure}
\begin{minipage}[c]{0.49\textwidth}
    \includegraphics[width=\textwidth]{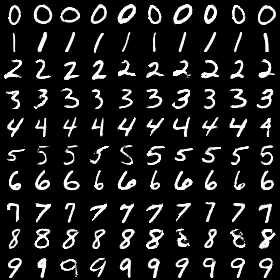}
    \caption{\label{image_fb} Fisher-Bingham distribution.}
\end{minipage}\hfill
\begin{minipage}[c]{0.49\textwidth}
    \includegraphics[width=\textwidth]{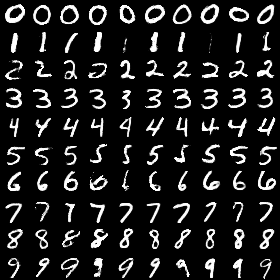}
    \caption{\label{image_vMF} von Mises-Fisher distribution.}
\end{minipage}\hfill
\end{figure}

\begin{figure}
\begin{minipage}[c]{0.49\textwidth}
    \includegraphics[width=\textwidth]{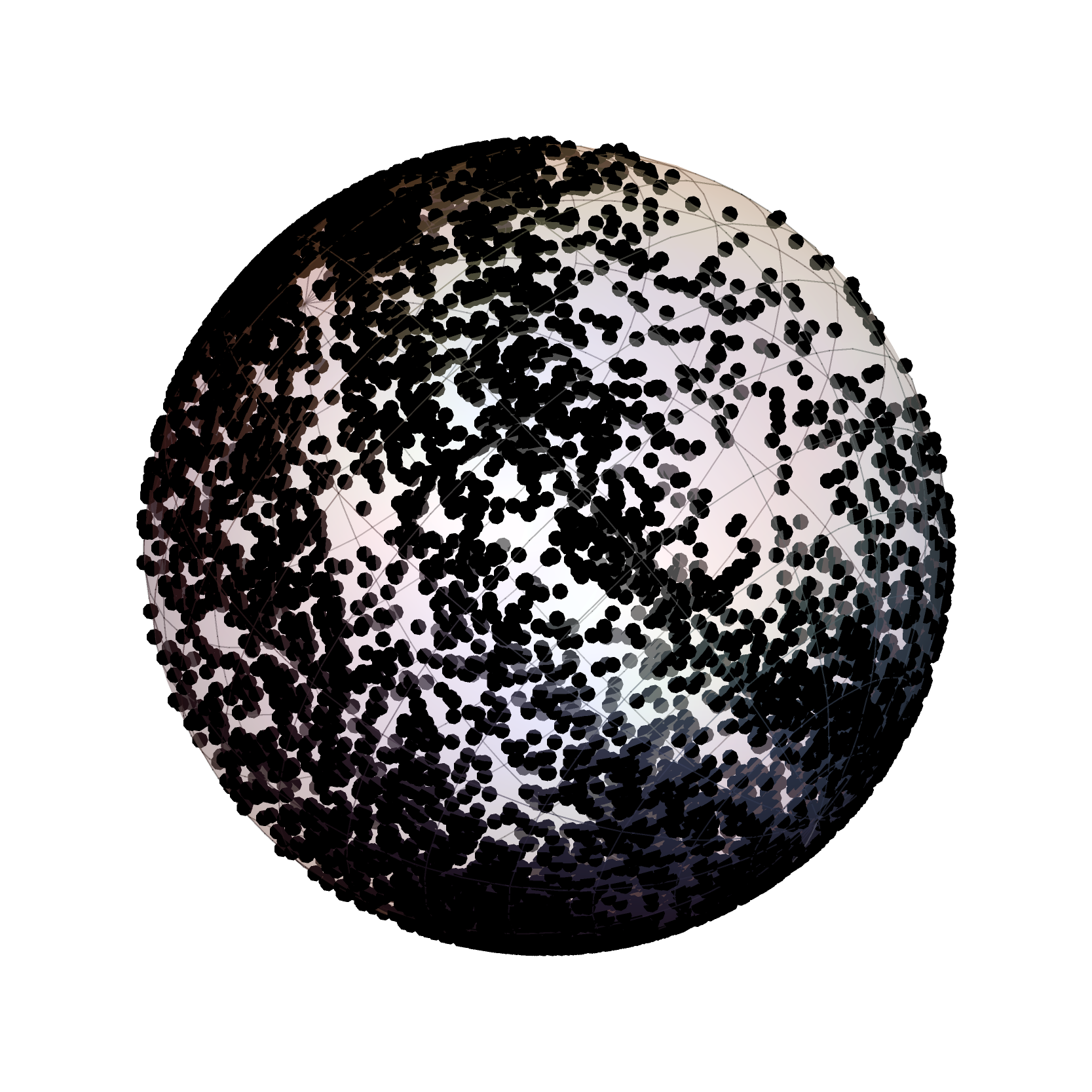}
    \caption{\label{image_1} Latent representation of the number $1$.}
\end{minipage}\hfill
\begin{minipage}[c]{0.49\textwidth}
    \includegraphics[width=\textwidth]{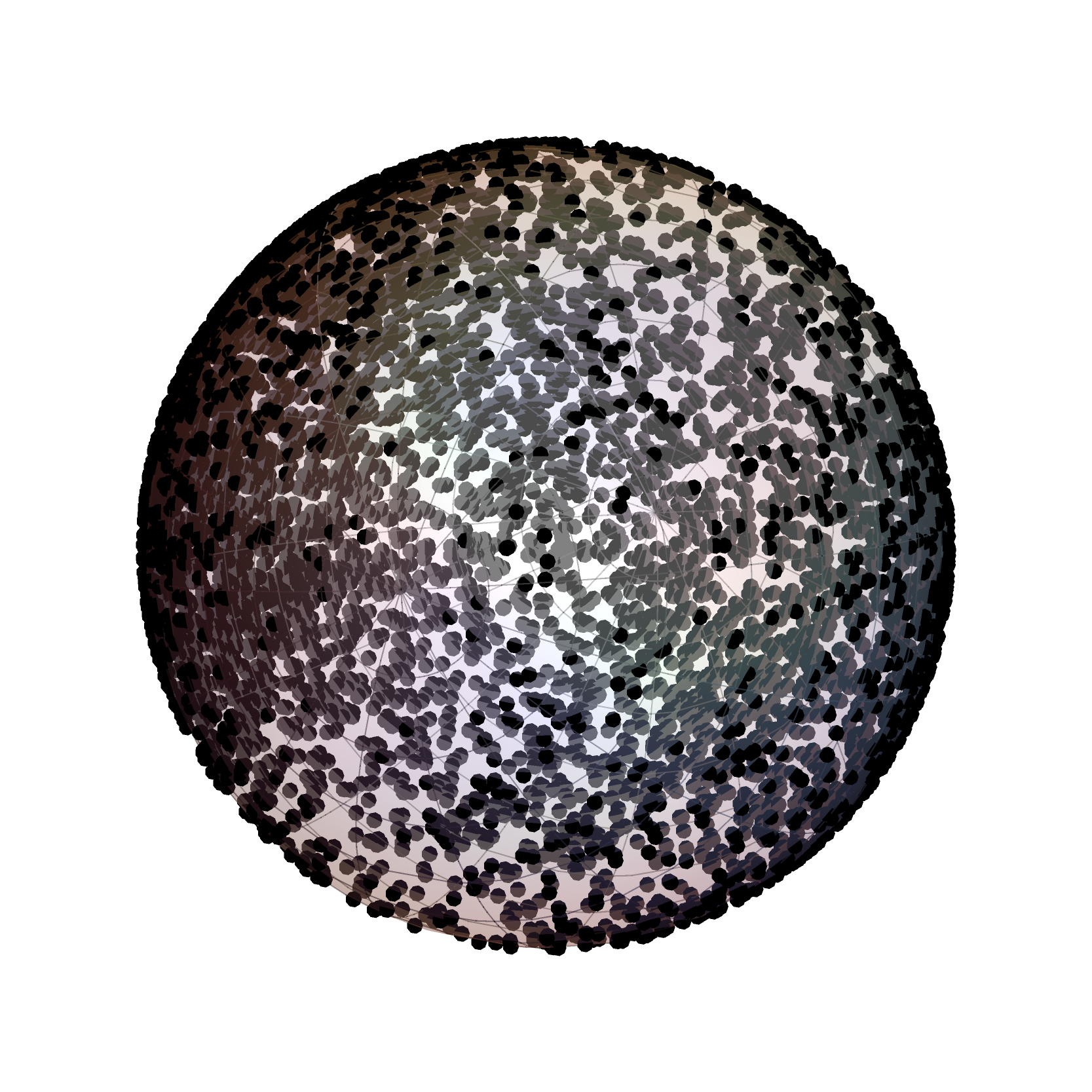}
    \caption{\label{image_6}  Latent representation of the number $6$.}
\end{minipage}\hfill
\end{figure}

\appendix
\section{Proofs} \label{appendix}

\begin{proof}[Proof of Theorem \ref{theoem_consistency_fisherbingham}]
    With \eqref{Stein_identity_manifolds} and standard matrix calculus we know that
    \begin{align*}
        \mathbb{E}[M_1'\mathrm{vech}'(A_0)+ E_1\mu_0-D_1]&=0, \\
         \mathbb{E}[G_1'\mathrm{vech}'(A_0)+ L_1\mu_0-H_1]&=0,
    \end{align*}
    and therefore together with Assumption \ref{Assumption_consistency_fisherbingham} we can write
    \begin{align*}
    \mathrm{vech}'(A_0)&= \mathbb{E}[M_1']^{-1}(\mathbb{E}[D_1]- \mathbb{E}[E_1]\mu_0), \\
    \mu_0&= (\mathbb{E}[L_1] - \mathbb{E}[G_1'] \mathbb{E}[M_1']^{-1}\mathbb{E}[E_1])^{-1} (\mathbb{E}[H_1] - \mathbb{E}[G_1'] \mathbb{E}[M_1']^{-1} \mathbb{E}[D_1]).
\end{align*}
By the strong law of large numbers we know that all sample means $M_n',D_n, E_n, G_n',H_n,L_n$ converge to their corresponding expectations. The remainder of the proof then follows by the continuous mapping theorem and Assumption \ref{Assumption_consistency_fisherbingham}.
\end{proof}

\begin{proof}[Proof of Theorem \ref{theorem_asympt_cov_fishervonmises}]
Let $X \sim vM\! F(\mu_0,\kappa_0)$. In \cite[p.\,169]{mardia2000directional}, the mean $\mathbb{E}[X]=\mu_0\mathcal{I}_{d/2}(\kappa_0)/\mathcal{I}_{d/2-1}(\kappa_0)$ is calculated. In \cite{hillen2017moments}, the authors give formulas for the covariance matrix of $X$ for dimensions $d=2$ and $d=3$. By adjusting their arguments for general dimensions and extending their results for higher moments of $X$ we obtain the following results: \vspace{0.2cm} \\ 
One has
\begin{align*}
    \mathbb{E}[XX^{\top}] = \frac{1}{\kappa_0} \frac{\mathcal{I}_{d/2}(\kappa_0)}{\mathcal{I}_{d/2-1}(\kappa_0)} I_d +  \frac{\mathcal{I}_{d/2+1}(\kappa_0)}{\mathcal{I}_{d/2-1}(\kappa_0)} \mu_0 \mu_0^{\top}.
\end{align*}
Moreover, tedious calculations give
\begin{align*}
    \mathbb{E}[\mathrm{vec}(XX^{\top})&\mathrm{vec}(XX^{\top})^{\top}]  \\
    =&\frac{\mathcal{I}_{d/2+2}(\kappa_0)}{\kappa_0\mathcal{I}_{d/2-1}(\kappa_0)}  \big( \mathrm{vec}(I_d)\mathrm{vec}(\mu_0\mu_0^{\top})^{\top} + \mathrm{vec}(\mu_0\mu_0^{\top})\mathrm{vec}(I_d)^{\top} + \mu_0\mu_0^{\top} \otimes I_d + I_d \otimes \mu_0\mu_0^{\top} \\
    & \qquad \qquad \qquad + (\mu_0\mu_0^{\top} \otimes I_d) \mathcal{K}_{d,d} + (I_d \otimes \mu_0\mu_0^{\top})\mathcal{K}_{d,d} \big) \\
     & +\frac{\mathcal{I}_{d/2+1}(\kappa_0)}{\kappa_0^2\mathcal{I}_{d/2-1}(\kappa_0)}  \big( \mathrm{vec}(I_d)\mathrm{vec}(I_d)^{\top} + I_d \otimes I_d + (I_d \otimes I_d) \mathcal{K}_{d,d} \big) \\
     & +\frac{\mathcal{I}_{d/2+3}(\kappa_0)}{\mathcal{I}_{d/2-1}(\kappa_0)}   \mathrm{vec}(\mu_0\mu_0^{\top})\mathrm{vec}(\mu_0\mu_0^{\top})^{\top},
\end{align*}
where $\mathcal{K}_{d,d}$ is the commutation matrix, as well as
\begin{align*}
    \mathbb{E}[X\mathrm{vec}(XX^{\top})^{\top}] = &\frac{\mathcal{I}_{d/2+1}(\kappa_0)}{\kappa_0\mathcal{I}_{d/2-1}(\kappa_0)}  \big( I_d \otimes \mu_0^{\top} + ( I_d \otimes \mu_0^{\top})\mathcal{K}_{d,d} + \mu_0 \, \mathrm{vec}(I_d)^{\top} \big) \\
     &+\frac{\mathcal{I}_{d/2+2}(\kappa_0)}{\mathcal{I}_{d/2-1}(\kappa_0)}  \mu_0 \, \mathrm{vec}(\mu_0\mu_0^{\top})^{\top}.
\end{align*}

We also need the corresponding covariance matrices:
\begin{align*}
    \mathrm{Var}[X] = \frac{1}{\kappa_0} \frac{\mathcal{I}_{d/2}(\kappa_0)}{\mathcal{I}_{d/2-1}(\kappa_0)} I_d + \bigg( \frac{\mathcal{I}_{d/2+1}(\kappa_0)}{\mathcal{I}_{d/2-1}(\kappa_0)} - \bigg(\frac{\mathcal{I}_{d/2}(\kappa_0)}{\mathcal{I}_{d/2-1}(\kappa_0)} \bigg)^2 \bigg) \mu_0 \mu_0^{\top}
\end{align*}
and
\begin{align*}
    \mathrm{Var}[\mathrm{vec}(XX^{\top})]  =&\frac{\mathcal{I}_{d/2+2}(\kappa_0)}{\kappa_0\mathcal{I}_{d/2-1}(\kappa_0)}  \big( \mathrm{vec}(I_d)\mathrm{vec}(\mu_0\mu_0^{\top})^{\top} + \mathrm{vec}(\mu_0\mu_0^{\top})\mathrm{vec}(I_d)^{\top} + \mu_0\mu_0^{\top} \otimes I_d  \\
    & \qquad \qquad \qquad  + I_d \otimes \mu_0\mu_0^{\top} + (\mu_0\mu_0^{\top} \otimes I_d) \mathcal{K}_{d,d} + (I_d \otimes \mu_0\mu_0^{\top})\mathcal{K}_{d,d} \big) \\
     &+\frac{\mathcal{I}_{d/2+1}(\kappa_0)}{\kappa_0^2\mathcal{I}_{d/2-1}(\kappa_0)}  \big( \mathrm{vec}(I_d)\mathrm{vec}(I_d)^{\top} + I_d \otimes I_d + (I_d \otimes I_d) \mathcal{K}_{d,d} \big) \\
     &+\frac{\mathcal{I}_{d/2+3}(\kappa_0)}{\mathcal{I}_{d/2-1}(\kappa_0)}   \mathrm{vec}(\mu_0\mu_0^{\top})\mathrm{vec}(\mu_0\mu_0^{\top})^{\top} \\
     &- \bigg(\frac{1}{\kappa_0} \frac{\mathcal{I}_{d/2}(\kappa_0)}{\mathcal{I}_{d/2-1}(\kappa_0)} \bigg)^2 \mathrm{vec}(I_d)\mathrm{vec}(I_d)^{\top} \\
     &- \frac{2}{\kappa_0}  \frac{\mathcal{I}_{d/2+1}(\kappa_0)}{\mathcal{I}_{d/2-1}(\kappa_0)} \frac{\mathcal{I}_{d/2}(\kappa_0)}{\mathcal{I}_{d/2-1}(\kappa_0)}  \mathrm{vec}(I_d)\mathrm{vec}(\mu_0\mu_0^{\top})^{\top} \\
     &-  \bigg( \frac{\mathcal{I}_{d/2+1}(\kappa_0)}{\mathcal{I}_{d/2-1}(\kappa_0)} \bigg)^2  \mathrm{vec}(\mu_0\mu_0^{\top})\mathrm{vec}(\mu_0\mu_0^{\top})^{\top}
\end{align*}
as well as
\begin{align*}
    V:=\mathbb{E}[X\mathrm{vec}&(XX^{\top})^{\top}] - \mathbb{E}[X] \mathbb{E}[\mathrm{vec}(XX^{\top})]^{\top}  \\
     =&\frac{\mathcal{I}_{d/2+1}(\kappa_0)}{\kappa_0\mathcal{I}_{d/2-1}(\kappa_0)}  \big( I_d \otimes \mu_0^{\top} + ( I_d \otimes \mu_0^{\top})\mathcal{K}_{d,d} + \mu_0 \, \mathrm{vec}(I_d)^{\top} \big) \\
     &+\frac{\mathcal{I}_{d/2+2}(\kappa_0)}{\mathcal{I}_{d/2-1}(\kappa_0)}  \mu_0 \, \mathrm{vec}(\mu_0\mu_0^{\top})^{\top} \\
     &- \bigg(\frac{\mathcal{I}_{d/2}(\kappa_0)}{\mathcal{I}_{d/2-1}(\kappa_0)} \bigg)  \bigg(\frac{1}{\kappa_0} \frac{\mathcal{I}_{d/2}(\kappa_0)}{\mathcal{I}_{d/2-1}(\kappa_0)} \mu_0 \, \mathrm{vec}(I_d)^{\top} +  \frac{\mathcal{I}_{d/2+1}(\kappa_0)}{\mathcal{I}_{d/2-1}(\kappa_0)} \mu_0 \, \mathrm{vec}(\mu_0 \mu_0^{\top})^{\top} \bigg)  .
\end{align*}

With
\begin{align*}
    Y_n=\begin{pmatrix} \mathrm{vec}(\overline{XX^{\top}}) \\
     \overline{X}\end{pmatrix}
\end{align*}
we know by the central limit theorem that
\begin{align*}
    \sqrt{n}(Y_n - \mathbb{E}[Y_n]) \overset{D}{\longrightarrow} N(0,\Lambda)
\end{align*}
with
\begin{align*}
    \Lambda = \begin{pmatrix} \mathrm{Var}[\mathrm{vec}(XX^{\top})] & V^{\top} \\ V & \mathrm{Var}[X]  \end{pmatrix}
\end{align*}
as $n \rightarrow \infty$. With $\ell:\mathbb{R}^d \rightarrow \mathbb{R}^d$, $x \mapsto x/ \Vert x \Vert$ we define $G: \mathbb{R}^{d \times d}  \times \mathbb{R}^{d} \supset \widetilde{D} \rightarrow \mathbb{R}$ as the function 
\[G(Z,z)=\frac{(d-1)\ell(z)^{\top}(I_d-Z)z}{\ell(z)^{\top}(I_d-Z)^2\ell(z)},\] 
where $Z \in \mathbb{R}^{d \times d}$, $z \in \mathbb{R}^{d}$, and $\widetilde{D} $ is the set of all $(Z,z)$ such that $\ell(z)^{\top}(I_d-Z)^2\ell(z) \neq 0$, which is an open set. In the sequel, when we differentiate a matrix-valued function with respect to a matrix-valued argument, we consider the vectorized function and the vectorized argument, i.e.\ $\frac{\partial f}{\partial x} = \frac{\partial \mathrm{vec}(f)}{\partial \mathrm{vec}(x)}$. We have that
    \begin{align} \label{fisher_von_mises_first_deriv}
    \begin{split}
        \frac{\partial G(Z,z)} {\partial Z}= &-\frac{(d-1)(z \otimes \ell(z))^{\top}}{\ell(z)^{\top} (I_d-Z)^2\ell(z)} \\
        &+ \frac{(d-1)\mathrm{vec}\big((I_d-Z)^{\top}\ell(z)\ell(z)^{\top} + \ell(z)\ell(z)^{\top}(I_d-Z)^{\top}\big)^{\top} \ell(z)^{\top} (I_d -Z ) z }{\big(\ell(z)^{\top} (I_d-Z)^2\ell(z)\big)^2}
    \end{split}
    \end{align}
    and
    \begin{align} \label{fisher_von_mises_sec_deriv}
    \begin{split} 
        \frac{\partial G(Z,z)} {\partial z}= &2(d-1) \bigg( \frac{\ell(z)^{\top}(I_d-Z)}{\ell(z)^{\top} (I_d - Z)^2 \ell(z) } - \frac{\ell(z)^{\top}(I_d-Z)\ell(z)\ell(z)^{\top}(I_d-Z)^2}{\big(\ell(z)^{\top} (I_d - Z)^2 \ell(z) \big)^2} \bigg) \big(I_d - \ell(z) \ell(z)^{\top} \big) \\
        &+ G(Z,z) \frac{\ell(z)^{\top}}{\Vert z \Vert}.
    \end{split}
    \end{align}
    For the latter derivative we applied the product rule to 
    \[G(Z,z)=\frac{(d-1)\ell(z)^{\top}(I_d-Z)\ell(z)}{\ell(z)^{\top}(I_d-Z)^2\ell(z)} \cdot \Vert z \Vert\]
    and used that $\nabla \ell(z)=(I_d -\ell(z) \ell(z)^{\top})/\Vert z \Vert $. By setting $Z=\mathbb{E}[XX^{\top}]$ and $z=\mathbb{E}[X]$ in \eqref{fisher_von_mises_first_deriv} and \eqref{fisher_von_mises_sec_deriv} we obtain
    \begin{align*}
        P_1= \frac{\mathcal{I}_{d/2-1}(\kappa_0) \kappa_0^2}{(d-1)\mathcal{I}_{d/2}(\kappa_0) } (\mu_0 \otimes \mu_0)^{\top}
    \end{align*}
    and 
    \begin{align*}
        P_2= \frac{\mathcal{I}_{d/2-1}(\kappa_0) \kappa_0}{\mathcal{I}_{d/2}(\kappa_0) } \mu_0^{\top}.
    \end{align*}
    For the calculation above we used that $\ell(\mathbb{E}[X])=\mu_0$. Now, we can apply the delta method to the random vector $Y_n$ with the function $G$. Then the asymptotic covariance matrix is given by
    \begin{align*}
        P= &P_1  \mathrm{Var}[\mathrm{vec}(XX^{\top})] P_1^{\top} + 2 P_2 V P_1^{\top} + P_2 \mathrm{Var}[X] P_2^{\top}.
    \end{align*}
    Tedious calculations then give the variance structure from the statement of the theorem.
\end{proof}

\begin{proof}[Proof of Theorem \ref{theorem_watson_asyp_norm}]
    Let $G$ be the function which maps $Y_n$ onto $\hat{\kappa}_n$. It is clear that $\hat{\kappa}_n^{\bullet}$ is a differentiable function of $Y_n$ (see \cite[Lemma 3.1.3]{kollo2005advanced} for the differentiability of $\hat{\mu}_n^{\bullet}$ with respect to $\overline{XX^{\top}}$) for a given value of $\bullet$. Now, note that under the assumptions of the theorem, $\hat{\theta}_n=(\hat{\kappa}_n,\hat{\mu}_n)$ is consistent in the sense of Theorem \ref{theoem_consistency_fisherbingham}, and therefore the probability of choosing the correct value for $\bullet$ is converging to $1$. In a similar way, with the assumptions made in the statement of the theorem, the probability that the two largest resp.\ smallest eigenvalues of the scatter matrix are distinct is converging to $1$. Hence, the function $G$ is well-defined with probability converging to $1$ and we can apply the delta method together with the central limit theorem which then gives the result.
\end{proof}

\section*{Acknowledgements}
AF is funded in part by ARC Consolidator grant from ULB and FNRS Grant CDR/OL J.0197.20 as well as EPSRC Grant EP/T018445/1. RG is funded in part by EPSRC grant EP/Y008650/1. YS is funded in part by ARC Consolidator grant from ULB and FNRS Grant CDR/OL J.0197.20.

\bibliography{library}
\bibliographystyle{abbrv}

\end{document}